\newtheorem{Rem}{Remark}
\newcommand{\abs}[1]{\left\lvert #1 \right\rvert}
\newcommand{\norm}[1]{\left\lVert #1 \right\rVert}
\newcommand{\smashnorm}[1]{\left\lVert \smash[t]{#1} \right\rVert}
\newcommand{\inprod}[2]{\left( #1, #2 \right)}
\newcommand{\eye}{\mathrm{I}}
\newcommand{\ud}{\mathrm{d}}
\DeclareMathOperator*{\argmin}{arg\,min}
\newcommand{\Newp}{\alpha}
\newcommand{\Newq}{\beta}
\newcommand{\Newr}{\delta}
\newcommand{\News}{\zeta}
\newcommand{\Newt}{\eta}
\title{Double saddle-point preconditioning for Krylov methods in the inexact sequential homotopy method}
\author{John W. Pearson\thanks{School of Mathematics, The University of Edinburgh, James Clerk Maxwell Building, The King's Buildings, Peter Guthrie Tait Road, Edinburgh, EH9 3FD, United Kingdom ({\tt j.pearson@ed.ac.uk})}\and Andreas Potschka\thanks{Institute of Mathematics, Clausthal University of Technology, Erzstr.~1, 38678 Clausthal-Zeller\-feld, Germany ({\tt andreas.potschka@tu-clausthal.de})}}
\begin{document}
\maketitle

\begin{abstract}
We derive an extension of the sequential homotopy method that allows for the application of inexact solvers for the linear (double) saddle-point systems arising in the local semismooth Newton method for the homotopy subproblems. For the class of problems that exhibit (after suitable partitioning of the variables) a zero in the off-diagonal blocks of the Hessian of the Lagrangian, we propose and analyze an efficient, parallelizable, symmetric positive definite preconditioner based on a double Schur complement approach. For discretized optimal control problems with PDE constraints, this structure is often present with the canonical partitioning of the variables in states and controls. We conclude with numerical results for a badly conditioned and highly nonlinear benchmark optimization problem with elliptic partial differential equations and control bounds. The resulting method
  allows for the parallel solution of large 3D problems.
\end{abstract}

\begin{keywords}PDE-constrained optimization; Active-set method; Homotopy method; Preconditioned iterative method\end{keywords}

\begin{AMS}49M37, 65F08, 65F10, 65K05, 90C30, 93C20\end{AMS}

\pagestyle{myheadings}
\thispagestyle{plain}
\markboth{J. W. PEARSON AND A. POTSCHKA}{PRECONDITIONED INEXACT SEQUENTIAL HOMOTOPY METHOD}

\section{Introduction}\label{sec:Intro}

We are interested in approximately solving large-scale optimization problems of the form
\begin{equation}
  \label{eqn:OP}
  \min \phi({x}) \text{ over } {x} \in C \text{ subject to } c({x}) = 0,
\end{equation}
where the objective functional $\phi: X \to \mathbb{R}$ and the equality constraint $c: X \to Y$ are twice continuously differentiable and $X = \mathbb{R}^n$, $Y = \mathbb{R}^m$, equipped with inner products
\[
  \inprod{{u}}{{v}}_X = {u}^T M_X {v}, \quad
  \inprod{{u}}{{v}}_Y = {u}^T M_Y {v},
\]
for sparse symmetric positive definite matrices $M_X \in \mathbb{R}^{n \times n}$ and $M_Y \in \mathbb{R}^{m \times m}$.
We model inequality constraints by the nonempty closed convex set
\begin{equation} \label{eqn:upper_and_lower_bounds}
  C = \{ {x} \in \mathbb{R}^n \mid {x}^{\mathrm{l}}_i \le {x}_i \le {x}^{\mathrm{u}}_i \text{ for } i = 1, \dotsc, n \}
\end{equation}
with lower and upper variable bounds ${x}^{\mathrm{l}} < {x}^{\mathrm{u}} \in \overline{\mathbb{R}}^n$, whose entries may take on values of $\pm \infty$. Moreover, we assume that the constraint Jacobian $c'(x)$, the objective Hessian $\nabla^2 \phi(x)$, and all constraint Hessians $\nabla^2 c_i(x)$, $i=1, \dotsc, m$, are sparse matrices.

A sequential homotopy method has recently been proposed~\cite{PoBo21} for the approximate solution of a Hilbert space generalization of~\eqref{eqn:OP}, where the resulting linear saddle-point systems were numerically solved by direct methods based on sparse matrix decompositions. The aim of this article is to address the challenges that arise when the linear systems are solved only approximately by Krylov-subspace methods and to analyze and leverage novel preconditioners that exploit a multiple saddle-point structure, in particular double saddle-point form, which often arises in optimal control problems with partial differential equation (PDE) constraints \cite{MNN17,SoZu19,PePo21}. We restrict the presentation here to finite but high-dimensional $X$ and $Y$ to stay focused on the linear algebra issues.

The design principle of the sequential homotopy method is staying in a neighborhood of a suitably defined flow, which can be interpreted as the result of an idealized method with infinitesimal stepsize. Flows of this kind were first used by Davidenko~\cite{Davidenko1953} and later extended by various researchers as the basis for a plethora of globalization methods~\cite{Arrow1958,Deuflhard1974,Deuflhard1991,Hohmann1994,Deuflhard1998,Bock2000,Deuflhard2004,Potschka2016,Potschka2018,BGPSG20,PoBo21}, often with a focus on affine invariance principles and partly to infinite-dimensional problems. However, special care needs to be taken for most of these approaches when solving nonconvex optimization problems, as the Newton flow is attracted to saddle points and maxima. Here we extend the sequential homotopy method of~\cite{PoBo21}, which solves a sequence of projected backward Euler steps on a projected gradient/antigradient flow, to allow for inexact linear system solutions and hence the use of Krylov-subspace methods. Salient properties of the sequential homotopy method are that the difficulties of nonconvexity and constraint degeneracy are handled on the nonlinear level by an implicit regularization similar to regularized/stabilized Sequential Quadratic Programming~(see, e.g.,~\cite{Wr98,Ha99,GiRo13,GiKuRo17,GiKuRo17a}). If semismooth Newton methods (see, e.g.,~\cite{Mifflin1977,Qi1993,ChQiSu1998,Ito1990,Ul02,HiItKu2002,Hintermueller2004,Ito2004,Hintermueller2006,Hintermueller2010}) are applied, the resulting methods can handle the inequality constraints in an active-set fashion. Active-set methods are of high interest, especially when a sequence of related problems needs to be solved, because warm-starts that reuse the solution from a previously solved problem can be easily accomplished. For approaches based on interior-point methods (see, e.g.,~\cite{GoPoPe2022}), efficient warm-starting is still an unsolved issue.

We propose and analyze new preconditioners for the resulting linear system at each iteration of the sequential homotopy method. Preconditioning PDE-constrained optimization problems has been a subject of considerable interest of late (see, e.g.,~\cite{ScZu07,RDW10,Zu11,PW12,PSW12,PSW14,PST15,PG17}), however relatively few such methods have made use of the double saddle-point structure. We refer the reader to recent, related work \cite{MNN17,SoZu19,PePo21,BrGr21}. We provide theoretical results on preconditioners for double saddle-point systems and, having arranged the linear systems obtained from the active-set approach to this form, we present efficient and flexible approximations to be applied within a Krylov-subspace solver. Having explained each approximation step, we demonstrate the performance of a block-diagonal (and also a block-triangular) preconditioner on a highly nonlinear benchmark optimization problem, and present some results on potential parallelizability.

This paper is structured as follows. Section \ref{sec:SHM} states the sequential homotopy method and its Krylov-subspace active-set interpretation. In Section \ref{sec:InexactSemismooth}, we propose the use of an inexact semismooth Newton corrector in the sequential homotopy method and discuss the active-set-dependent linear systems which we need to solve. Section \ref{sec:Precon} gives results on preconditioners for double saddle-point matrices, which is the form of these systems, and Section \ref{sec:Implement} contains details of our implementation and approximations in our preconditioners. Section \ref{sec:Results} presents a range of numerical results.

\section{Sequential homotopy method}\label{sec:SHM}

We briefly recapitulate the sequential homotopy method proposed in~\cite{PoBo21}. Let
\[
  L^{\rho}(x, y) := \phi(x) + \frac{\rho}{2} c(x)^T M_Y^{-1} c(x) + y^T c(x)
\]
denote the augmented Lagrangian with some sufficiently large $\rho \ge 0$.\footnote{This form hinges on the interpretation that $c$ in fact maps to the dual space of $Y$: we have that $L^{\rho}(x, y) = \phi(x) + \frac{\rho}{2} [M_Y^{-1} c(x)]^T M_Y [M_Y^{-1} c(x)] + y^T M_Y [M_Y^{-1} c(x)] = \phi(x) + \frac{\rho}{2} \norm{\tilde{c}(x)}_Y + \inprod{y}{\tilde{c}(x)}_Y$ with the Riesz representation $\tilde{c}(x) = M_Y^{-1} c(x)$ defined by the variational equality $\inprod{y}{\tilde{c}(x)}_Y = y^T c(x) ~ \forall y \in Y$ (cf.~\cite{GuHeSa2014}). In the finite-dimensional space $Y = \mathbb{R}^m$, it might seem more elegant to simplify the arguments by identifying $Y$ with its dual and the duality pairing with an inner product. However, if~\eqref{eqn:OP} is a discretization of an infinite-dimensional problem, the distinction is necessary if we want to preserve our chances of ending up with a mesh-independent algorithm.}

The sequential homotopy method solves a sequence of subproblems that differ in $(\hat{x}, \hat{y}) \in C \times Y$ and a homotopy parameter $\Delta t > 0$:
\begin{equation}
  \label{eqn:hom}
  \begin{aligned}
    x - P_C\left( \hat{x} - \Delta t \nabla_x L^{\rho}(x, y) \right) &= 0,\\
    y - \phantom{P_C\,} \left( \hat{y} + \Delta t \nabla_y L^{\rho}(x, y) \right) &= 0,
  \end{aligned}
\end{equation}
for the unknowns $(x, y) \in X \times Y$, where $P_C$ is the (in general nonsmooth) projection onto $C$. By \cite[Thm.~4]{PoBo21}, we know that for sufficiently small $\Delta t > 0$, \eqref{eqn:hom} admits a unique solution provided that a Lipschitz condition on $\nabla L^{\rho}$ holds. For $\Delta t = 0$, the solution to~\eqref{eqn:hom} is $(x, y) = (\hat{x}, \hat{y})$. If a continuation of solutions $(x(\Delta t), y(\Delta t))$ to~\eqref{eqn:hom} exists for $\Delta t \to \infty$, then $x(\Delta t), y(\Delta t)$ tends to a critical point of the original problem~\eqref{eqn:OP}. If the solution $(x(\Delta t), y(\Delta t))$ cannot be continued (or if we decide not to follow it) further than some finite $\Delta t^\ast > 0$, we can update the reference point $(\hat{x}, \hat{y}) = (x(\Delta t^\ast), y(\Delta t^\ast))$ and recommence the next homotopy leg (hence the name sequential homotopy method).


Local semismooth Newton methods are ideal candidates to solve~\eqref{eqn:hom} (see~\cite{PoBo21}), because the solution is close to $(\hat{x}, \hat{y})$ if $\Delta t > 0$ is sufficiently small. 
From this vantage point, our investigation of \emph{inexact} semismooth Newton methods for~\eqref{eqn:hom} provides a class of active-set methods that are based on Krylov-subspace methods, which work particularly well in combination with suitable preconditioners such as those provided in Section~\ref{sec:Precon}.

\section{Homotopy method with inexact semismooth Newton corrector}\label{sec:InexactSemismooth}

Although a local semismooth Newton method could in principle be applied to~\eqref{eqn:hom} directly, we prefer to equivalently reformulate~\eqref{eqn:hom} to avoid numerical issues for large $\Delta t$ and to obtain symmetric linearized systems (up to nonsymmetric active set modifications). 
This approach is possible for general symmetric positive definite $M_X$, but we prefer to restrict the discussion to the important special case where $M_X$ is a diagonal matrix, which reduces the notational and computational burden significantly. We elaborate on this restriction in the context of finite element (FE) discretizations in Sec.~\ref{sec:Precon}.
In the case of diagonal $M_X$, the projection
$
  P_C(v) = \argmin_{x \in C} \norm{x - v}_X 
$
can be represented by entrywise clipping via
\begin{equation} \label{eqn:pointwise_projection}
  P_C(v)_i := \max \left(x^\mathrm{l}_i, \min \left(v_i, x^\mathrm{u}_i\right)\right) \quad \text{for } i = 1, \dotsc, n.
\end{equation}
To achieve a better scaling for large $\Delta t$, we premultiply the rows of equation system~\eqref{eqn:hom} by $\lambda M_X$ and $-\lambda M_Y$, where $\lambda = 1 / \Delta t$, which leads to the equivalent system
\begin{equation} \label{eqn:sym_hom}
  F(x, y; \hat{x}, \hat{y}, \lambda) =
  \begin{pmatrix}
    F_1(x, y; \hat{x}, \hat{y}, \lambda) \\
    F_2(x, y; \hat{x}, \hat{y}, \lambda)
  \end{pmatrix}
  :=
  \begin{pmatrix}
    \lambda M_X \left(x - P_C(\hat{x} - \Delta t \nabla_x L^\rho(x, y))\right) \\
    -\lambda M_Y (y - \hat{y}) + M_Y \nabla_y L^\rho(x, y)
  \end{pmatrix} = 0.
\end{equation}
It is apparent that $F(\cdot, \cdot \, ; \hat{x}, \hat{y}, \lambda)$ is Lipschitz continuous and piecewise continuously differentiable and, hence, semismooth~\cite{scholtes_2012_introduction,Ul02}.
In each iteration of a semismooth Newton method for solving~\eqref{eqn:sym_hom}, we first need to evaluate $F$ at the current iterate $(x,y)$. 
Denoting the diagonal entries of $M_X$ by $d_i > 0$, $i = 1, \dotsc, n$ and using that the gradient is the Riesz representation of the derivative, the argument $v$ of $P_C$ in~\eqref{eqn:sym_hom} consists of the entries
\[
  v_i = \hat{x}_i - \frac{\Delta t}{d_i} \frac{\ud}{\ud x_i} L^\rho(x, y).
\]
We then identify the lower and upper active sets by comparing $v$ with $x^\mathrm{l}$ and $x^\mathrm{u}$ as
\begin{equation}
  \begin{aligned}
    \mathbb{A}^\mathrm{l} &:= \left\{ i \mid x^\mathrm{l}_i \ge v_i \right\} = \left\{ i \mid \tfrac{\ud}{\ud x_i} L^\rho(x, y) \ge \lambda d_i (\hat{x}_i - x^\mathrm{l}_i) \right\}, \\
    \mathbb{A}^\mathrm{u} &:= \left\{ i \mid x^\mathrm{u}_i \le v_i \right\} = \left\{ i \mid \tfrac{\ud}{\ud x_i} L^\rho(x, y) \le \lambda d_i (\hat{x}_i - x^\mathrm{u}_i) \right\}.
  \end{aligned}
\end{equation}
This leads to the representations
\begin{align*}
  F_1(x, y; \hat{x}, \hat{y}, \lambda)_i &=
  \begin{cases}
    \lambda d_i (x_i - x^\mathrm{l}_i) & \text{for } i \in \mathbb{A}^\mathrm{l}, \\
    \lambda d_i (x_i - x^\mathrm{u}_i) & \text{for } i \in \mathbb{A}^\mathrm{u}, \\
    \lambda d_i (x_i - \hat{x}_i) + \tfrac{\ud}{\ud x_i} L^\rho(x, y) & \text{otherwise},
  \end{cases}\\
  F_2(x, y; \hat{x}, \hat{y}, \lambda) &= -\lambda M_Y (y - \hat{y}) + c(x).
\end{align*}
An element of the set-valued generalized derivative $\partial F(x, y; \hat{x}, \hat{y}, \lambda)$ can then conveniently be constructed by substitution of the active rows $i \in \mathbb{A} := \mathbb{A}^\mathrm{l} \cup \mathbb{A}^\mathrm{u}$ of the symmetric matrix
\[
  \lambda \diag(M_X, -M_Y) + \nabla^2 L^\rho(x, y)
  =
  \begin{pmatrix}
    \lambda M_X + H^\rho & G^T \\
    G & -\lambda M_Y
  \end{pmatrix}
\]
by rows consisting entirely of zeros except for diagonal entries of $\lambda d_i$, $i = 1, \dotsc, n$.
Here, we have used the abbreviations $G = c'(x)$, $H = \nabla^2 L^0(x, y + \rho M_Y^{-1} c(x))$ and write that $H^\rho = \nabla^2_{xx} L^\rho(x, y) = H + \rho G^T M_Y^{-1} G$.

We can then multiply all active rows $i \in \mathbb{A}$ and all active entries of the residual $-F(x, y; \hat{x}, \hat{y}, \lambda)$ by $\Delta t / d_i$, and permute all active rows and columns to the upper left of the linear system to arrive at
\begin{equation} \label{eqn:augm_lin_sys_with_act_set}
  \begin{pmatrix}
    \eye_{\abs{\mathbb{A}}} & 0 & 0 \\
    \eye_{\overline{\mathbb{A}}}^T \left[\lambda M_X + H^\rho \right] \eye_\mathbb{A} & \lambda \tilde{M}_X + \tilde{H}^\rho & \tilde{G}^T \\
    G \eye_\mathbb{A} & \tilde{G} & -\lambda M_Y
  \end{pmatrix}
  \begin{pmatrix}
    \Delta x_{\mathbb{A}} \\ \Delta x_{\overline{\mathbb{A}}} \\
    \Delta y
  \end{pmatrix}
  = -
  \begin{pmatrix}
    b_0 \\ b_1 \\ b_2
  \end{pmatrix},
\end{equation}
with $\overline{\mathbb{A}} = \{ 1, \dotsc, n\} \setminus \mathbb{A}$, $\eye_{\mathbb{A}}$ a rectangular matrix consisting of the columns $i \in \mathbb{A}$ of $\eye_n$ (likewise for $\eye_{\overline{\mathbb{A}}}$), $\tilde{M}_X = \eye_{\overline{\mathbb{A}}}^T M_X \eye_{\overline{\mathbb{A}}}$, $\tilde{H}^\rho = \eye_{\overline{\mathbb{A}}}^T H^\rho \eye_{\overline{\mathbb{A}}}$, and $\tilde{G} = G \eye_{\overline{\mathbb{A}}}$. 

The same nonlinear transformation as in~\cite[Sec.~5.1]{PoBo21} can be used to avoid forming the dense matrix $G^T M_Y^{-1} G$ in $H^\rho$ so that we only need to solve the sparse system
\begin{equation} \label{eqn:discr_system_with_active_set}
  \begin{pmatrix}
    \eye_{\abs{\mathbb{A}}} & 0 & 0 \\
    \eye_{\overline{\mathbb{A}}}^T \left[\lambda M_X + H \right] \eye_\mathbb{A} & \lambda \tilde{M}_X + \tilde{H} & \tilde{G}^T \\
    G \eye_\mathbb{A} & \tilde{G} & -\frac{\lambda}{1 + \rho \lambda} M_Y
  \end{pmatrix}
  \begin{pmatrix}
    \Delta x_{\mathbb{A}} \\ \Delta x_{\overline{\mathbb{A}}} \\ \Delta \tilde{y}
  \end{pmatrix}
  = -
  \begin{pmatrix}
    b_0 \\ b_1 \\ \tilde{b}_2
  \end{pmatrix},
\end{equation}
where $\tilde{b}_2 = \frac{1}{1 + \rho \lambda} b_2$ and $\tilde{H} = \eye_{\overline{\mathbb{A}}}^T H \eye_{\overline{\mathbb{A}}}$. The solution $\Delta y$ of~\eqref{eqn:augm_lin_sys_with_act_set} can then be recovered from $\Delta \tilde{y}$ via $\Delta y = \frac{1}{1 + \rho \lambda} (\Delta \tilde{y} + \rho M_Y^{-1} b_2 ).$


\begin{algorithm}[p]
  \caption{Inexact sequential homotopy method}
  \label{alg:iseqhom}
  \DontPrintSemicolon
  \KwData{$z_0 = (x_0, y_0) \in C \times Y$, $0 < \theta_\mathrm{ref} < \theta_\mathrm{max} < 1$, $0 < \lambda_\mathrm{min} \le \lambda_{\mathrm{term}}$, $0 < \lambda_{\mathrm{red}} < 1 < \lambda_{\mathrm{inc}}$, $K_P, K_I \ge 0$, $\mathrm{tol} > 0$}
  Initialize $x = x_0$, $y = y_0$, $\lambda = 1$, $I_{\mathrm{PI}} = 0$\;
  \For{$k = 0, 1, \dotsc$}{
    Update reference $\hat{x} = x$, $\hat{y} = y$, set accept $=$ false\;
    \tcblk{Preparations for semismooth Newton corrector steps}
    Compute $b_2 = c(x)$, Riesz rep.~$r = M_Y^{-1} b_2$, and $b_1 = \frac{\ud}{\ud x}L^0(x, y + \rho r)^T$\;
    Compute $\mathcal{H} = \nabla^2 L^0(x, y + \rho r)$\;
    \While{not \textnormal{accept}}{
      \tcblk{Semismooth Newton corrector step}
      Set $\mathbb{A}^\mathrm{l} = \{ i \mid (b_1)_i \ge \lambda d_i (\hat{x}_i - x^\mathrm{l}_i)\}$, $\mathbb{A}^\mathrm{u} = \{ i \mid (b_1)_i \le \lambda d_i (\hat{x}_i - x^\mathrm{u}_i)\}$\;
      Set $(\tilde{b}_1)_i = x_i - x^\mathrm{l/u}_i$ for $i \in \mathbb{A}$ and $(\tilde{b}_1)_i = (b_1)_i$ for $i \in \overline{\mathbb{A}}$\;
      Scale $\tilde{b}_2 = \frac{1}{1 + \rho \lambda} b_2$\;
      Compose $\hat{\mathcal{A}} = \diag \bigl( \lambda M_X, -\frac{\lambda}{1 + \rho \lambda} M_Y \bigr) + \mathcal{H}$\;
      Copy $\tilde{\mathcal{A}} = \hat{\mathcal{A}}$ and override rows $i \in \mathbb{A}$ of $\tilde{\mathcal{A}}$ with $e_i^T$\;
      Solve $\displaystyle \tilde{\mathcal{A}}
      \begin{pmatrix}
        \Delta x \\
        \Delta \tilde{y}  
      \end{pmatrix}
      = -
      \begin{pmatrix}
        \tilde{b}_1 \\
        \tilde{b}_2
      \end{pmatrix}$
      approximately\;
      Recover $\Delta y = \frac{1}{1 + \rho \lambda} (\Delta \tilde{y} + \rho r)$\;
      Project step $x^+ = P_C(x + \Delta x)$ and set $y^+ = y + \Delta y$\;
      \tcblk{Simplified semismooth Newton corrector step}
      Compute $b_2^+ = \lambda M_Y (\hat{y} - y^+) + c(x^+)$, $r^+ = M_Y^{-1} b_2^+$, $b_1^+ = \frac{\ud}{\ud x} L^0(x^+, y^+ + \rho r^+)^T$\;
      Set $\mathbb{A}^\mathrm{l} = \{ i \mid (b^+_1)_i \ge \lambda d_i (\hat{x}_i - x^\mathrm{l}_i)\}$, $\mathbb{A}^\mathrm{u} = \{ i \mid (b^+_1)_i \le \lambda d_i (\hat{x}_i - x^\mathrm{u}_i)\}$\;
      Set $(\tilde{b}^+_1)_i = x^+_i - x^\mathrm{l/u}_i$ for $i \in \mathbb{A}$, $(\tilde{b}^+_1)_i = \lambda d_i (x^+_i - \hat{x}_i) + (b^+_1)_i$ for $i \in \overline{\mathbb{A}}$\;
      Scale $\tilde{b}_2^+ = \frac{1}{1 + \rho \lambda} b_2^+$\;
      Copy $\tilde{\mathcal{A}} = \hat{\mathcal{A}}$ and override rows $i \in \mathbb{A}^+$ of $\tilde{\mathcal{A}}$ with $e_i^T$\;
      Solve $\displaystyle \tilde{\mathcal{A}}
      \begin{pmatrix}
        \Delta x^+ \\
        \Delta \tilde{y}^+  
      \end{pmatrix}
      = -
      \begin{pmatrix}
        \tilde{b}_1^+ \\
        \tilde{b}_2^+
      \end{pmatrix}$
      approximately\;
      Recover $\Delta y^+ = \frac{1}{1 + \rho \lambda} (\Delta \tilde{y}^+ + \rho r^+)$\;
      Project step $x^{++} = P_C(x^+ + \Delta x^+)$ and set $y^{++} = y^+ + \Delta y^+$\;
      \tcblk{Stepsize adjustment}
      Set $\theta = \left( \frac{\smashnorm{x^{++} - x^+}_X^2 + \smashnorm{y^{++} - y^+}_Y^2}{\smashnorm{x^+ - x}_Y^2 + \smashnorm{y^+ - y}_Y^2}\right)^{1/2}$\;
      \eIf(\textsl{(monotonicity test successful)}){$\theta \le \theta_\mathrm{max}$}{
        Set accept $=$ true, $x = x^{++}$, $y = y^{++}$\;
        \lIf{$\norm{x - \hat{x}}_X^2 + \norm{y - \hat{y}}_Y^2 \le \mathrm{tol}^2$ and $\lambda \le \lambda_\mathrm{term}$}{\Return $(x, y)$}
        \eIf(\textsl{(stepsize PI controller)}){$\theta > 0$}{
          Set $e = \log(\theta_\mathrm{ref}) - \log(\theta)$\;
          Set $\lambda = \max(\lambda\exp(-K_P e -K_I I_{\mathrm{PI}}), \lambda_\mathrm{min})$, update $I_{\mathrm{PI}} = I_{\mathrm{PI}} + e$\;
        }{
          Reduce $\lambda = \max(\lambda \cdot \lambda_\mathrm{red}, \lambda_\mathrm{min})$\;
        }
      }{
        Increase $\lambda = \lambda \cdot \lambda_\mathrm{inc}$\;
        \lIf{$I_{\mathrm{PI}} > 0$}{set $I_{\mathrm{PI}} = 0$}
      }
    }
  }
\end{algorithm}

When a symmetric Krylov-subspace method is started with an initial guess that solves the first block row of~\eqref{eqn:discr_system_with_active_set} exactly, then the generated Krylov subspace will only depend on the symmetric 2-by-2 block in the lower right, and symmetric Krylov-subspace methods can safely be used.
Hence, preconditioners are only required for the symmetric part of~\eqref{eqn:discr_system_with_active_set}.

We provide a pseudocode description of the inexact sequential homotopy method in Alg.~\ref{alg:iseqhom}. The algorithm employs only one inexact semismooth Newton step plus one simplified (i.e., without updating the system matrix except for active set changes) semismooth Newton step, and then simultaneously updates the reference point $(\hat{x}, \hat{y})$ and $\lambda$ in $F$.
Additionally, the permutation of the active rows and columns to the top-left block of the system is not performed explicitly.

As a practical implementation, we use \cite[Alg.~1]{PoBo21}, with the modification that the linear systems are solved approximately.
The choice of the relative termination tolerance 
$\kappa \ge 0$ for the preconditioned residual norm of
the Krylov-subspace method to approximately solve the linear systems involving $\tilde{\mathcal{A}}$ is a delicate issue here.
The choice should effect less accurate and thus less expensive linear system solves when far away from the solution. 
In contrast to the setting in, e.g., \cite{ChQiSu1998}, there are more involved restrictions on $\kappa$, because we are shooting at a moving target inside the homotopy method. Unfortunately, these restrictions are, at least so far, hard to quantify. Qualitatively, the size of the tolerance $\kappa$ must be balanced with the size of $\lambda$. Ultimately, we would like to have a small $\lambda$ to progress fast on the nonlinear level, but this would necessitate small $\kappa$ to stay within the (moving) region of local convergence. Hence, we propose the affine-linear-plus-saturation heuristic
\[
  \kappa = P_{[\kappa_{\mathrm{min}}, \kappa_{\mathrm{max}}]}\left(\kappa_{\mathrm{max}} + (\kappa_{\mathrm{min}} - \kappa_{\mathrm{max}})
  \tfrac{\lambda - \lambda^r_0}{\lambda^r_1 - \lambda^r_0}\right).
\]
We employ this heuristic in the numerical experiments with the values set to
  $\kappa_{\mathrm{min}} = 10^{-7},$
  $\kappa_{\mathrm{max}} = 10^{-3},$
  $\lambda^r_0 = 1,$
  $\lambda^r_1 = 10^{-7}.$
We further enforce $\lambda \ge \lambda_{\mathrm{min}} = 10^{-7}$ in the adaptive stepsize control.

The numerical criterion for evaluating whether we keep inside the region of local convergence is the same increment monotonicity test as described in~\cite[Alg.~1]{PoBo21}, which also serves as the basis for the adaptive stepsize control. 
The stepsize proportional-integral (PI) controller constants are the same as in \cite[Alg.~1]{PoBo21}.

\section{Preconditioned iterative methods for double saddle-point systems}\label{sec:Precon}

To derive efficient preconditioners, it is important to exploit additional problem structure. We take interest here in a special property of discretizations of a large class of PDE-constrained optimal control problems:
\begin{equation} \label{eqn:PDECO_class}
  \begin{alignedat}{3}
    \min~ & \frac{1}{2} \norm{u - u_{\mathrm{d}}}^2_V + \frac{\gamma}{2} \norm{q - q_\mathrm{d}}^2_Q\\
    \text{ over } & (q, u) \in C_Q \times U,\\
    \text{s.t.~} & c((u, q)) = c((u, 0)) + c_q((0, 0)) q = 0,
  \end{alignedat}
\end{equation}
for which the matrix $H$ is of block-diagonal form, because the control and state variables are separated in the objective function and the control enters the (possibly nonlinear) state equation only affine linearly. We assume that the discretized space $X = Q \times U$ comprises discretized controls $q \in C_Q \subset Q$ and (unconstrained) discretized states $u \in C_U = U$ with a tracking term, while $y \in Y$ are the adjoint variables. We can model control bounds in $C_Q$, which is of the form~\eqref{eqn:upper_and_lower_bounds}. 

The inner product matrix $M_X = \diag(M_Q, M_U)$ is a block-diagonal matrix, which typically consists of a mass matrix $M_Q$ for the controls and a stiffness matrix $M_U$ for the states. The inner product matrix $M_Y$ is typically also a stiffness matrix.
There are, however, good reasons to use only the diagonal of a mass matrix for $M_Q$ (mass lumping): spectral equivalence~\cite{Wathen87} leads to an equivalent norm with mesh-independent equivalence constants and, just like the pointwise formula for projections in $L^2$, the projector $P_C$ turns into an entrywise projection, which can be cheaply computed via~\eqref{eqn:pointwise_projection}.

Following the obvious extension of variable naming above, a symmetric permutation of~\eqref{eqn:discr_system_with_active_set} leads to
\begin{equation} \label{Ahk}
  \begin{pmatrix}
    \eye_{\abs{\mathbb{A}}} & 0 & 0 & 0 \\
    \ast & \lambda \tilde{M}_Q + \tilde{H}_Q & \tilde{G}_Q^T & 0 \\
    \ast & \tilde{G}_Q & -\frac{\lambda}{1 + \rho \lambda} M_Y & G_U \\
    \ast & 0 & G_U^T & \lambda M_U + H_U
  \end{pmatrix}
  \begin{pmatrix}
    \Delta {q}_{\mathbb{A}} \\
    \Delta {q}_{\overline{\mathbb{A}}} \\
    \Delta \tilde{{y}} \\
    \Delta {u}
  \end{pmatrix}
  = -
  \begin{pmatrix}
    {b}_0 \\
    {b}_{1,q} \\
    \frac{1}{1 + \rho \lambda} {b}_2 \\
    {b}_{1,u}
  \end{pmatrix}.
\end{equation}
We remark here that the regularization parameter $\gamma$ only enters in the $\tilde{H}_Q$ block.

In this section we provide some theory of preconditioning matrices of the form
\begin{equation}
\ \label{calA} \mathcal{A}=\left(\begin{array}{ccc}
A_1 & B_{1}^T & 0 \\ B_{1} & -A_2 & B_{2}^T \\ 0 & B_{2} & A_3 \\
\end{array}\right),
\end{equation}
which are frequently referred to as \emph{double saddle-point systems}. This is important, as the systems we are required to solve in this paper are of the form \eqref{calA}. Specifically, after (trivially) eliminating the first block-row in \eqref{Ahk}, we obtain a system of the form \eqref{calA} from the lower right 3-by-3 blocks of~\eqref{Ahk}. 
It is important that the proposed preconditioners are robust with respect to the augmented Lagrangian coefficient $\rho > 0$ and the proximity parameter $\lambda$, which may typically vary between $10^{-12}$ and $10^1$.

We highlight that there has been much previous work on preconditioners for double saddle-point systems, see for example \cite{GaHe00,MNN17,BeBe18,SoZu19}. In particular, \cite{SoZu19} provides a comprehensive description of eigenvalues of preconditioned double saddle-point systems on the continuous level (i.e., by the operators involved). What follows in this section is an analysis for double saddle-point systems, inspired by the logic of the proof given in \cite[Thm.~4]{MNN17}.

For our forthcoming analysis we will make use of the following result, which concerns eigenvalues for generalized saddle-point systems preconditioned by a block-diagonal matrix. The bounds described below can be found elsewhere in the literature: for instance, see \cite[Cor. 1]{AxNe06}, \cite[Lem. 2.2]{SiWa94}, \cite[Thm.~4]{Pe13}.
\begin{theorem}\label{2by2}
Let
\begin{equation*}
\ \mathcal{A}_2=\left(\begin{array}{cc}
A_1 & B^T \\ B & -A_2 \\
\end{array}\right),\quad\quad\mathcal{P}_2=\left(\begin{array}{cc}
A_1 & 0 \\ 0 & S_1 \\
\end{array}\right),
\end{equation*}
where $A_1$, $S_1=A_2+BA_1^{-1}B^T$ are assumed to be symmetric positive definite, and $A_2$ is assumed to be symmetric positive semi-definite. Then all eigenvalues of $\mathcal{P}_2^{-1}\mathcal{A}_2$ satisfy $\mu\left(\mathcal{P}_2^{-1}\mathcal{A}_2\right)\in\left[-1,\frac{1}{2}(1-\sqrt{5})\right]\cup\left[1,\frac{1}{2}(1+\sqrt{5})\right].$
\end{theorem}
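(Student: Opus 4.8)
The plan is to diagonalize the generalized eigenvalue problem $\mathcal{A}_2 v = \mu \mathcal{P}_2 v$ by working in the inner product induced by $\mathcal{P}_2$, which is positive definite by hypothesis. Writing $v = (v_1, v_2)$, the eigenvalue equation splits as $A_1 v_1 + B^T v_2 = \mu A_1 v_1$ and $B v_1 - A_2 v_2 = \mu S_1 v_2$. First I would dispose of the trivial eigenvalue: if $v_2 = 0$ then $B^T v_2 = 0$ forces $\mu = 1$ (since $A_1$ is nonsingular and $v_1 \neq 0$), which lies in the stated set. So assume $v_2 \neq 0$. From the first block equation, $(1-\mu) A_1 v_1 = B^T v_2$. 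If $\mu = 1$ this gives $B^T v_2 = 0$; substituting into the second equation yields $B v_1 = S_1 v_2 = (A_2 + B A_1^{-1} B^T) v_2$, and pairing with $v_2$ gives $v_2^T B v_1 = v_2^T A_2 v_2 \ge 0$ — this case is consistent and gives the endpoint $\mu = 1$, so it causes no trouble. Hence assume $\mu \neq 1$, so that $v_1 = \frac{1}{1-\mu} A_1^{-1} B^T v_2$.

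Next I would substitute this expression for $v_1$ into the second block equation to obtain a reduced eigenvalue problem purely in $v_2$:
\begin{equation*}
  \frac{1}{1-\mu} B A_1^{-1} B^T v_2 - A_2 v_2 = \mu S_1 v_2.
\end{equation*}
Introducing the Rayleigh-type quantities $\sigma = \dfrac{v_2^T B A_1^{-1} B^T v_2}{v_2^T S_1 v_2} \in [0,1]$ (the inclusion because $B A_1^{-1} B^T \preceq S_1$ and $B A_1^{-1} B^T \succeq 0$) and correspondingly $\dfrac{v_2^T A_2 v_2}{v_2^T S_1 v_2} = 1 - \sigma$ (since $A_2 = S_1 - B A_1^{-1} B^T$), I would pair the reduced equation with $v_2$ and divide by $v_2^T S_1 v_2 > 0$ to arrive at the scalar relation
\begin{equation*}
  \frac{\sigma}{1-\mu} - (1-\sigma) = \mu,
\end{equation*}
i.e.\ $\mu^2 - \mu \sigma - \sigma = 0$ after clearing the denominator and simplifying. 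Solving the quadratic gives $\mu = \tfrac{1}{2}\bigl(\sigma \pm \sqrt{\sigma^2 + 4\sigma}\bigr)$, and the final step is to show that as $\sigma$ ranges over $[0,1]$, the two branches trace out exactly $[-1, \tfrac{1}{2}(1-\sqrt{5})]$ (the minus branch) and $[1, \tfrac{1}{2}(1+\sqrt{5})]$ (the plus branch): both branches are monotone in $\sigma$, the plus branch runs from $0$ at $\sigma=0$ — wait, one must be careful here, since $\sigma = 0$ is excluded when $B^T v_2 \neq 0$; the boundary value $\sigma = 0$ corresponds precisely to the $B^T v_2 = 0$ case already handled, giving $\mu = -1$ from $\mu^2 = 0$... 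Let me re-examine: at $\sigma = 0$ the quadratic degenerates to $\mu^2 = 0$, but that case was separated out above, so on the relevant range $\sigma \in (0,1]$ the plus branch increases from just above $1$ to $\tfrac{1}{2}(1+\sqrt5)$ and the minus branch decreases from just below $-1$... I need the endpoints to close up, which they do once the $B^Tv_2=0$ subcase (contributing $\mu=-1$ and $\mu=1$) is folded in.

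The main obstacle is the bookkeeping around the degenerate cases $\mu = 1$ and $B^T v_2 = 0$ (equivalently $\sigma \in \{0\}$ or $v_2$ in the kernel of $B^T$), making sure each produces an eigenvalue inside the claimed union and that the endpoints $\pm 1$, $\tfrac12(1\pm\sqrt5)$ are all attained or attainable as limits so the closed intervals are justified. The genuinely computational part — substituting, forming the quadratic, and checking monotonicity of $\mu(\sigma) = \tfrac12(\sigma \pm \sqrt{\sigma^2+4\sigma})$ on $[0,1]$ — is routine; one simply evaluates at $\sigma = 0$ and $\sigma = 1$ and notes $\mu'(\sigma)$ has constant sign on each branch. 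Alternatively, and perhaps more cleanly for the write-up, I would cite the stated references (\cite[Cor.~1]{AxNe06}, \cite[Lem.~2.2]{SiWa94}, \cite[Thm.~4]{Pe13}) and only sketch this argument, since the result is classical; but the self-contained derivation above via the scalar quadratic $\mu^2 - \sigma\mu - \sigma = 0$ is short enough to include in full.
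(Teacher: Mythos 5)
The paper itself does not supply a proof of Theorem~\ref{2by2}, instead citing \cite[Cor.~1]{AxNe06}, \cite[Lem.~2.2]{SiWa94}, \cite[Thm.~4]{Pe13}, so there is no in-paper argument to compare against. Your overall strategy — eliminate $v_1$, form a Rayleigh-quotient scalar $\sigma \in [0,1]$, reduce to a quadratic in $\mu$, and sweep $\sigma$ over $[0,1]$ — is the standard route and can be made to work. However, as written it has a sign error that propagates into the final quadratic and would not prove the stated bounds.

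From $A_1 v_1 + B^T v_2 = \mu A_1 v_1$ one gets $B^T v_2 = (\mu-1)A_1 v_1$, hence
$v_1 = -\tfrac{1}{1-\mu} A_1^{-1} B^T v_2$, \emph{with a minus sign}. You dropped this sign, writing $v_1 = +\tfrac{1}{1-\mu}A_1^{-1}B^Tv_2$. The correct scalar relation after pairing with $v_2$ and dividing by $v_2^T S_1 v_2 > 0$ is therefore
\[
  -\frac{\sigma}{1-\mu} - (1-\sigma) = \mu,
\]
which simplifies to $\mu^2 - \sigma\mu - 1 = 0$. (Separately, even with your sign convention, $\tfrac{\sigma}{1-\mu} - (1-\sigma) = \mu$ does not simplify to $\mu^2 - \sigma\mu - \sigma = 0$ but to $\mu^2 - \sigma\mu + 2\sigma - 1 = 0$, so there is a second, independent algebra slip.) The discrepancy is not cosmetic: your quadratic gives $\mu = 0$ at $\sigma=0$, which you correctly flagged as suspicious in your own write-up, and it traces out the interval $\bigl[\tfrac12(1-\sqrt5),\tfrac12(1+\sqrt5)\bigr]$ containing $0$ rather than the stated union of two disjoint intervals. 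With the correct quadratic $\mu^2 - \sigma\mu - 1 = 0$, the two branches $\mu_\pm(\sigma) = \tfrac12\bigl(\sigma \pm \sqrt{\sigma^2+4}\bigr)$ are both monotone increasing on $[0,1]$; $\mu_+$ runs from $1$ to $\tfrac12(1+\sqrt5)$, $\mu_-$ from $-1$ to $\tfrac12(1-\sqrt5)$, yielding exactly the claimed set $[-1,\tfrac12(1-\sqrt5)] \cup [1,\tfrac12(1+\sqrt5)]$, and the degenerate cases ($v_2=0$ gives $\mu=1$; $B^Tv_2=0$ gives $\sigma = 0$ and hence $\mu = \pm1$) fall out cleanly without needing the ad hoc ``folding in'' you sketched. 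You can sanity-check the corrected quadratic with the scalar instance $A_1=B=1$, $A_2=0$: there $\sigma=1$ and $\mathcal{P}_2^{-1}\mathcal{A}_2$ has eigenvalues $\tfrac12(1\pm\sqrt5)$, which $\mu^2-\mu-1=0$ reproduces, whereas your quadratic $\mu^2-\mu-1=0$ at $\sigma=1$ also happens to coincide — the two quadratics agree only at $\sigma=1$, which is why the error is easy to miss.
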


We now analyze a block-diagonal preconditioner for matrix systems of the form \eqref{calA}, in the simplified setting that $A_{3}=0$, using the logic of \cite[Thm.~4]{MNN17}.
\begin{theorem}\label{3by3C=0}
Let
\begin{equation*}
\ \mathcal{A}_0=\left(\begin{array}{ccc}
A_1 & B_{1}^T & 0 \\ B_{1} & -A_2 & B_{2}^T \\ 0 & B_{2} & 0 \\
\end{array}\right),\quad\quad\mathcal{P}_D=\left(\begin{array}{ccc}
A_1 & 0 & 0 \\ 0 & S_1 & 0 \\ 0 & 0 & S_2 \\
\end{array}\right),
\end{equation*}
where $A_1$, $S_1=A_2+B_{1}A_1^{-1}B_{1}^T$, $S_2=B_{2}S_1^{-1}B_{2}^T$ are assumed to be symmetric positive definite, and $A_2$ is assumed to be symmetric positive semi-definite.\footnote{The matrix $B_{2}$ must therefore have at least as many columns as rows, and have full row rank.} Then all eigenvalues of $\mathcal{P}_D^{-1}\mathcal{A}_0$ satisfy:
\begin{align*}
\ \mu\left(\mathcal{P}_D^{-1}\mathcal{A}_0\right)\in{}&\left[-\frac{1}{2}(1+\sqrt{5}),2\hspace{0.15em}\emph{cos}\left(\frac{5\pi}{7}\right)\right]\cup\left[-1,\frac{1}{2}(1-\sqrt{5})\right] \\
\ &\quad\quad\cup\left[2\hspace{0.15em}\emph{cos}\left(\frac{3\pi}{7}\right),\frac{1}{2}(\sqrt{5}-1)\right]\cup\left[1,2\hspace{0.15em}\emph{cos}\left(\frac{\pi}{7}\right)\right],
\end{align*}
which to $3$ decimal places are $[-1.618,-1.247]\cup[-1,-0.618]\cup[0.445,0.618]\cup[1,1.802]$.
\end{theorem}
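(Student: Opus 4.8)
The plan is to reduce the $3\times 3$ eigenvalue problem to a scalar characteristic equation in the eigenvalue $\mu$, exploiting the block structure and the definitions of the Schur complements. Writing the generalized eigenvalue problem $\mathcal{A}_0 v = \mu \mathcal{P}_D v$ with $v = (x,y,z)$ partitioned conformally, I would first handle the degenerate cases $\mu = 0$ and $\mu = \pm 1$ separately (noting, e.g., that $A_3 = 0$ forces some structure), and then, assuming $\mu \notin \{0, \pm 1\}$, eliminate the variables one at a time. From the first block row, $A_1 x + B_1^T y = \mu A_1 x$, so $x = \frac{1}{\mu-1} A_1^{-1} B_1^T y$. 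Substituting into the second block row and using $S_1 = A_2 + B_1 A_1^{-1} B_1^T$, one obtains a relation of the form $\bigl(\text{coefficient in }\mu\bigr) S_1 y = B_2^T z$ after absorbing the $-A_2 y$ term; the key algebraic point is that the combination $\frac{1}{\mu-1} B_1 A_1^{-1} B_1^T - A_2 - \mu \cdot(\text{something})$ collapses to a scalar multiple of $S_1$ precisely because $A_2$ appears both in $S_1$ and in the $(2,2)$ block. Then the third block row $B_2 y = \mu S_2 z$ together with $S_2 = B_2 S_1^{-1} B_2^T$ closes the system, yielding a cubic polynomial $p(\mu) = 0$ whose coefficients are universal (independent of the specific matrices), so that the eigenvalues lie among the roots of a fixed scalar polynomial — this is exactly the mechanism used in \cite[Thm.~4]{MNN17}.

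Once the characteristic cubic is in hand, I expect it to factor (or be analyzed) in a way that produces the four stated intervals: two of the endpoints, $\tfrac12(1\pm\sqrt5)$ and $\pm 1$, are inherited from the $2\times 2$ bound of Theorem~\ref{2by2} applied to the leading $2\times 2$ block, while the trigonometric endpoints $2\cos(k\pi/7)$ arise as roots of a Chebyshev-type polynomial — concretely, $2\cos(\pi/7), 2\cos(3\pi/7), 2\cos(5\pi/7)$ are the three roots of $t^3 - t^2 - 2t + 1 = 0$. So I would aim to show that the characteristic equation, after the substitution $\mu = $ (appropriate expression), reduces to this degree-three minimal polynomial of $2\cos(\pi/7)$, which pins down the extreme eigenvalues. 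The interlacing/sign pattern of $p$ on the real line, combined with the fact that $\mathcal{P}_D^{-1}\mathcal{A}_0$ is self-adjoint in the $\mathcal{P}_D$-inner product (hence has real spectrum), then forces the eigenvalues into the claimed union of four intervals, with the gaps corresponding to sign changes of $p$.

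The main obstacle I anticipate is the elimination step: carrying the $-A_2$ block through the substitution without it obstructing the collapse to scalar multiples of $S_1$ and $S_2$. One has to be careful that $A_2$ is only positive \emph{semi}definite, so it cannot be inverted, and the argument must route entirely through $S_1$ (which is SPD by hypothesis); the trick is to add and subtract $A_2 y$ judiciously so that every occurrence of $A_2$ is paired with a $B_1 A_1^{-1} B_1^T$ to form $S_1$. A secondary technical point is justifying that one may assume $y \neq 0$ and $z \neq 0$ in the generic case (if $z = 0$ the problem degenerates to the $2\times 2$ case of Theorem~\ref{2by2}, and if $y = 0$ then $B_2^T z = 0$ forces $z = 0$ since $B_2$ has full row rank), which cleanly disposes of the boundary cases and confirms that the extreme values $\pm 1$ and $\tfrac12(1\pm\sqrt5)$ are attained exactly in the reduced subproblems. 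Finally, converting the three cubic roots into the closed-form trigonometric expressions and verifying the $3$-decimal-place numerics is routine once the cubic $t^3 - t^2 - 2t + 1$ is identified.
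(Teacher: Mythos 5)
The central algebraic claim in your proposal fails. After substituting $x = \frac{1}{\mu-1}A_1^{-1}B_1^T y$ into the second block row $B_1 x - A_2 y + B_2^T z = \mu S_1 y$, you obtain
\[
  \Bigl(\tfrac{1}{\mu-1} - \mu\Bigr) B_1 A_1^{-1} B_1^T\, y \;-\; (1+\mu)\, A_2\, y \;+\; B_2^T z \;=\; 0,
\]
and the two scalar coefficients $\frac{1}{\mu-1}-\mu = \frac{1+\mu-\mu^2}{\mu-1}$ and $-(1+\mu)$ are \emph{not} proportional for generic $\mu$ (they coincide only when $\mu=0$). So the combination does not ``collapse to a scalar multiple of $S_1$'' --- there is no judicious adding-and-subtracting of $A_2 y$ that fixes this, because $B_1 A_1^{-1} B_1^T$ and $A_2$ enter with genuinely different $\mu$-dependent weights. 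This is precisely the obstacle you flagged at the end, and it is fatal to the route you sketch: if the elimination \emph{did} reduce to a universal scalar cubic, the spectrum of $\mathcal{P}_D^{-1}\mathcal{A}_0$ would be confined to a fixed finite set of points, whereas the theorem (correctly) asserts inclusion in intervals with positive length. The cubic $t^3-t^2-2t+1$ with roots $2\cos(k\pi/7)$ does appear, but only as the \emph{limiting} case $A_2=0$ (where the two coefficients above both multiply $B_1 A_1^{-1}B_1^T = S_1$); the other endpoints $\pm1$ and $\tfrac12(1\pm\sqrt5)$ arise from other extremes, and general $A_2\succeq 0$ fills out the gaps between them.

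The paper's actual argument sidesteps the collapse entirely by keeping the two terms separate and arguing by contradiction. Assuming $\mu$ lies outside the asserted union of intervals, and observing that various scalar weights (denoted $\alpha=1-\mu$, $\beta=1+\mu-\mu^2$, $\delta=1-\mu^2$) are then nonzero with controlled signs, one eliminates $x_1$ and then $x_2$ using the \emph{weighted} matrix $\beta\, B_1 A_1^{-1}B_1^T + \delta\, A_2$ (which is definite whenever $\beta$ and $\delta$ share a sign), and finally reduces the third block row to a scalar-valued identity $x_3^T \Psi\, x_3 = 0$ where $\Psi$ is a difference of two inverses of positive-combination matrices. A case-by-case sign analysis --- using positive semi-definiteness of $B_1 A_1^{-1}B_1^T$ and of $A_2$, plus the monotonicity of matrix inversion on the definite cone --- shows $\Psi$ is definite in every sub-interval outside the claimed spectrum, contradicting $x_3^T\Psi x_3=0$. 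If you want to salvage your outline, replace the ``universal cubic'' step with this definiteness contradiction: the key lemma is not a factorization of a characteristic polynomial, but the Loewner-order fact that $0 \prec M \prec N$ implies $N^{-1} \prec M^{-1}$, applied to the two weighted sums of $B_1A_1^{-1}B_1^T$ and $A_2$.
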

\begin{proof}
Examining the associated eigenproblem
\begin{equation*}
\ \left(\begin{array}{ccc}
A_1 & B_{1}^T & 0 \\ B_{1} & -A_2 & B_{2}^T \\ 0 & B_{2} & 0 \\
\end{array}\right)\left(\begin{array}{c}
x_1 \\ x_2 \\ x_3 \\
\end{array}\right)=\mu\left(\begin{array}{ccc}
A_1 & 0 & 0 \\ 0 & S_1 & 0 \\ 0 & 0 & S_2 \\
\end{array}\right)\left(\begin{array}{c}
x_1 \\ x_2 \\ x_3 \\
\end{array}\right),
\end{equation*}
we obtain that
\begin{subequations}
\begin{align}
\ \label{A3eig1} A_1x_1+B_{1}^Tx_2={}&\mu{}A_1x_1, \\
\ \label{A3eig2} B_{1}x_1-A_2x_2+B_{2}^Tx_3={}&\mu{}S_1x_2, \\
\ \label{A3eig3} B_{2}x_2={}&\mu{}S_2x_3.
\end{align}
\end{subequations}
We now prove the result by contradiction, that is we assume there exists an eigenvalue outside the stated intervals. From \eqref{A3eig1}, we see that
\begin{equation}
\ \label{pA13} \Newp A_1x_1+B_{1}^Tx_2=0\quad\Rightarrow\quad\Newp x_1=-A_1^{-1}B_{1}^Tx_2,
\end{equation}
where $\Newp=1-\mu\neq0$ by assumption. Substituting \eqref{pA13} into \eqref{A3eig2} tells us that
\begin{align}
\ \nonumber 0={}&\Newp \left[B_{1}x_1-(1+\mu)A_2x_2+B_{2}^Tx_3-\mu{}B_{1}A_1^{-1}B_{1}^Tx_2\right] \\
\ \nonumber ={}&-B_{1}A_1^{-1}B_{1}^Tx_2-\Newp (1+\mu)A_2x_2+\Newp B_{2}^Tx_3-\mu\Newp B_{1}A_1^{-1}B_{1}^Tx_2 \\
\ \label{sum2terms3} ={}&-[\Newq B_{1}A_1^{-1}B_{1}^T+\Newr A_2]x_2+\Newp B_{2}^Tx_3,
\end{align}
where $\Newq=1+\mu\Newp =1+\mu-\mu^2\neq0$, $\Newr=\Newp(1+\mu)=1-\mu^2\neq0$ by assumption. Now, for the cases $\mu\in(-\infty,-1)$, $\mu\in(\frac{1}{2}(1-\sqrt{5}),1)$, and $\mu\in(\frac{1}{2}(1+\sqrt{5}),+\infty)$, $\Newq$ and $\Newr$ have the same signs (negative, positive, and negative, respectively), so $\Newq B_{1}A_1^{-1}B_{1}^T+\Newr A_2$ is a definite (and hence invertible) matrix. Thus, in these cases, \eqref{sum2terms3} tells us that
\begin{equation*}
\ x_2=\Newp[\Newq B_{1}A_1^{-1}B_{1}^T+\Newr A_2]^{-1}B_{2}^Tx_3,
\end{equation*}
which we may then substitute into \eqref{A3eig3} to yield that
\begin{equation}
\ \label{pCCT} 0=\Newp B_{2}[\Newq B_{1}A_1^{-1}B_{1}^T+\Newr A_2]^{-1}B_{2}^Tx_3-\mu{}B_{2}[B_{1}A_1^{-1}B_{1}^T+A_2]^{-1}B_{2}^Tx_3.
\end{equation}

We now highlight that $\mu\neq0$; otherwise \eqref{pCCT} would read that
\begin{equation*}
\ 0=B_{2}[B_{1}A_1^{-1}B_{1}^T+A_2]^{-1}B_{2}^Tx_3=S_2x_3\quad\Rightarrow\quad x_3=0,
\end{equation*}
which means that
\begin{equation*}
\ \left(\begin{array}{cc}
A_1 & B_{1}^T \\ B_{1} & -A_2 \\
\end{array}\right)\left(\begin{array}{c}
x_1 \\ x_2 \\
\end{array}\right)=\mu\left(\begin{array}{cc}
A_1 & 0 \\ 0 & S_1 \\
\end{array}\right)\left(\begin{array}{c}
x_1 \\ x_2 \\
\end{array}\right),
\end{equation*}
holds (along with $B_{2}x_2=0$). Applying Thm.~\ref{2by2} then gives that $\mu\in[-1,\frac{1}{2}(1-\sqrt{5})]\cup[1,\frac{1}{2}(1+\sqrt{5})]$, yielding a contradiction. Using that $\mu\neq0$, as well as that $\Newp\neq0$ by assumption, we may divide \eqref{pCCT} by $\Newp\mu$ to obtain that
\begin{align*}
\ 0={}&B_{2}[\mu\Newq B_{1}A_1^{-1}B_{1}^T+\mu\Newr A_2]^{-1}B_{2}^Tx_3-B_{2}[\Newp B_{1}A_1^{-1}B_{1}^T+\Newp A_2]^{-1}B_{2}^Tx_3 \\
\ ={}&B_{2}[\News B_{1}A_1^{-1}B_{1}^T+\Newt A_2]^{-1}B_{2}^Tx_3-B_{2}[\Newp B_{1}A_1^{-1}B_{1}^T+\Newp A_2]^{-1}B_{2}^Tx_3,
\end{align*}
where $\News=\mu\Newq =\mu+\mu^2-\mu^3$ and $\Newt=\mu\Newr=\mu-\mu^3$. As the situation $x_3=0$ reduces the problem to that of Thm.~\ref{2by2}, where $\mu$ is contained within a subset of the intervals claimed here, we may reduce the analysis to the case $x_3\neq0$ and write
\begin{equation}
\ \label{x3Phix3} 0=x_3^{T}B_{2}\left([\News B_{1}A_1^{-1}B_{1}^T+\Newt A_2]^{-1}-[\Newp B_{1}A_1^{-1}B_{1}^T+\Newp A_2]^{-1}\right)B_{2}^Tx_3=:x_3^{T}\Psi x_3,
\end{equation}
with both $B_{1}A_1^{-1}B_{1}^T$ and $A_2$ symmetric positive semi-definite. We now consider different cases (we have already excluded the possibility that $\mu=0$):
\begin{itemize}
\item \underline{$\mu\in(-\infty,-\frac{1}{2}(1+\sqrt{5})):$} Here $\Newp,\News,\Newt>0$, such that $\News>\Newp$, $\Newt>\Newp$. Hence $\News B_{1}A_1^{-1}B_{1}^T+\Newt A_2\succ\Newp B_{1}A_1^{-1}B_{1}^T+\Newp A_2$ (meaning $[\News B_{1}A_1^{-1}B_{1}^T+\Newt A_2]-[\Newp B_{1}A_1^{-1}B_{1}^T+\Newp A_2]$ is positive definite), and $\Psi$ is negative definite. This yields a contradiction with \eqref{x3Phix3}, so there is no $\mu\in(-\infty,-\frac{1}{2}(1+\sqrt{5}))$.
\item \underline{$\mu\in(2\hspace{0.15em}\text{cos}(\frac{5\pi}{7}),-1):$} Here $\Newp,\News,\Newt>0$, such that $\News<\Newp$, $\Newt<\Newp$. Hence $\News B_{1}A_1^{-1}B_{1}^T+\Newt A_2\prec\Newp B_{1}A_1^{-1}B_{1}^T+\Newp A_2$ (meaning $[\News B_{1}A_1^{-1}B_{1}^T+\Newt A_2]-[\Newp B_{1}A_1^{-1}B_{1}^T+\Newp A_2]$ is negative definite), and $\Psi$ is positive definite. Similarly to above, this yields a contradiction.
\item \underline{$\mu\in(\frac{1}{2}(1-\sqrt{5}),0):$} Here, $\Newp>0$, $\News,\Newt<0$, so $[\News B_{1}A_1^{-1}B_{1}^T+\Newt A_2]^{-1}$ is negative definite and $[\Newp B_{1}A_1^{-1}B_{1}^T+\Newp A_2]^{-1}$ is positive definite. Therefore $\Psi$ is negative definite, yielding a contradiction.
\item \underline{$\mu\in(0,2\hspace{0.15em}\text{cos}(\frac{3\pi}{7})):$} Here $\Newp,\News,\Newt>0$, such that $\News<\Newp$, $\Newt<\Newp$. Hence $\News B_{1}A_1^{-1}B_{1}^T+\Newt A_2\prec\Newp B_{1}A_1^{-1}B_{1}^T+\Newp A_2$, and $\Psi$ is positive definite, yielding a contradiction.
\item \underline{$\mu\in(\frac{1}{2}(\sqrt{5}-1),1):$} Here $\Newp,\News,\Newt>0$, such that $\News>\Newp$, $\Newt>\Newp$. Hence $\News B_{1}A_1^{-1}B_{1}^T+\Newt A_2\succ\Newp B_{1}A_1^{-1}B_{1}^T+\Newp A_2$, and $\Psi$ is negative definite, yielding a contradiction.
\item \underline{$\mu\in(2\hspace{0.15em}\text{cos}(\frac{\pi}{7}),+\infty):$} Here $\Newp,\News,\Newt<0$, such that $\News<\Newp$, $\Newt<\Newp$. Hence $\News B_{1}A_1^{-1}B_{1}^T+\Newt A_2$ and $\Newp B_{1}A_1^{-1}B_{1}^T+\Newp A_2$ are both negative definite, such that $\News B_{1}A_1^{-1}B_{1}^T+\Newt A_2\prec\Newp B_{1}A_1^{-1}B_{1}^T+\Newp A_2$. Therefore $[\News B_{1}A_1^{-1}B_{1}^T+\Newt A_2]^{-1}\succ[\Newp B_{1}A_1^{-1}B_{1}^T+\Newp A_2]^{-1}$, and $\Psi$ is positive definite, yielding a contradiction.
\end{itemize}
The result is thus proved by contradiction.
\end{proof}

By applying the structure of the proof of Thm.~\ref{3by3C=0}, we may analyze the analogous block-diagonal preconditioner for \eqref{calA}, for the more general case that $A_{3}$ is symmetric positive semi-definite. The following retrieves the result shown in Thm.~3.3 of the recent paper \cite{BrGr21}, using a different structure of proof:
\begin{theorem}\label{3by3}
Let
\begin{equation*}
\ \mathcal{P}_D=\left(\begin{array}{ccc}
A_1 & 0 & 0 \\ 0 & S_1 & 0 \\ 0 & 0 & S_2 \\
\end{array}\right),
\end{equation*}
with $\mathcal{A}$ as defined in \eqref{calA}, where $A_1$, $S_1=A_2+B_{1}A_1^{-1}B_{1}^T$, $S_2=A_3+B_{2}S_1^{-1}B_{2}^T$ are assumed to be symmetric positive definite, and $A_2$, $A_3$ are assumed to be symmetric positive semi-definite. Then all eigenvalues of $\mathcal{P}_D^{-1}\mathcal{A}$ satisfy:
\begin{equation*}
\ \mu\left(\mathcal{P}_D^{-1}\mathcal{A}\right)\in\left[-\frac{1}{2}(1+\sqrt{5}),\frac{1}{2}(1-\sqrt{5})\right]\cup\left[2\hspace{0.15em}\emph{cos}\left(\frac{3\pi}{7}\right),2\hspace{0.15em}\emph{cos}\left(\frac{\pi}{7}\right)\right],
\end{equation*}
which to $3$ decimal places are $[-1.618,-0.618]\cup[0.445,1.802]$.
\end{theorem}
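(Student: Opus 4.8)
The plan is to mirror the proof of Theorem~\ref{3by3C=0} almost line by line, carrying along the single new term that $A_3$ contributes to the eigenequations, and to check at the end that this term always matches the rest of the expression in sign, so that it can only reinforce the contradiction. As there, I would argue by contradiction, assuming $\mathcal{P}_D^{-1}\mathcal{A}$ has an eigenvalue $\mu$ outside the claimed set; then $\mu\neq1$ (which lies in the claimed set) and $\mu\neq0$ (as $\mathcal{A}$ is nonsingular, being congruent to $\operatorname{diag}(A_1,-S_1,S_2)$). The first two block rows of the generalized eigenproblem coincide with \eqref{A3eig1}--\eqref{A3eig2}, so exactly as before one obtains \eqref{pA13}, i.e.\ $\Newp x_1=-A_1^{-1}B_1^Tx_2$ with $\Newp=1-\mu\neq0$, and then \eqref{sum2terms3}, i.e.\ $[\Newq B_1A_1^{-1}B_1^T+\Newr A_2]x_2=\Newp B_2^Tx_3$ with $\Newq=1+\mu-\mu^2$ and $\Newr=1-\mu^2$. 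The only change is the third block row, which now reads $B_2x_2+A_3x_3=\mu S_2x_3$; using $S_2=A_3+B_2S_1^{-1}B_2^T$ this rearranges to
\[ B_2x_2=\mu B_2S_1^{-1}B_2^Tx_3-\Newp A_3x_3, \]
the extra summand being $-\Newp A_3x_3$.

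Next I would reuse verbatim the verification from Theorem~\ref{3by3C=0} that, for every $\mu$ in the complement of the claimed set, $\Newq$ and $\Newr$ have a common nonzero sign, so that $\Newq B_1A_1^{-1}B_1^T+\Newr A_2$ is definite, hence invertible; here $S_1=A_2+B_1A_1^{-1}B_1^T\succ0$ is used. Solving for $x_2$, substituting into the modified third row, dividing by the nonzero scalar $\Newp\mu$, and performing the same algebra that produced \eqref{x3Phix3}, one arrives at
\[ x_3^T\Psi x_3+\tfrac{1}{\mu}\,x_3^TA_3x_3=0, \]
where $\Psi$ is exactly the matrix of \eqref{x3Phix3} (formed from $\News=\mu+\mu^2-\mu^3$ and $\Newt=\mu-\mu^3$) and the new term is $\tfrac{1}{\mu}x_3^TA_3x_3$. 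As before, the sub-case $x_3=0$ collapses the system to the $2\times2$ eigenproblem of Theorem~\ref{2by2}, whose spectrum lies inside the claimed set, so I may assume $x_3\neq0$.

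The final step is the case distinction over the four connected components of the complement of the claimed set, namely $\mu\in(-\infty,-\tfrac12(1+\sqrt5))$, $\mu\in(\tfrac12(1-\sqrt5),0)$, $\mu\in(0,2\cos(3\pi/7))$, and $\mu\in(2\cos(\pi/7),\infty)$, which are precisely four of the six cases treated in Theorem~\ref{3by3C=0}. For the first two, the sign analysis there gives $\Psi\prec0$, and since $\mu<0$ we also have $\tfrac{1}{\mu}A_3\preceq0$; for the last two it gives $\Psi\succ0$, and since $\mu>0$ we have $\tfrac{1}{\mu}A_3\succeq0$. In either situation $x_3^T\Psi x_3+\tfrac{1}{\mu}x_3^TA_3x_3$ is strictly one-signed unless $B_2^Tx_3=0$ and $A_3x_3=0$ both hold; but then $S_2x_3=A_3x_3+B_2S_1^{-1}B_2^Tx_3=0$, forcing $x_3=0$ since $S_2\succ0$, a contradiction.

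I expect the main obstacle to be bookkeeping rather than insight: one must recheck, interval by interval, the sign patterns of $\Newq,\Newr$ (for the invertibility of $\Newq B_1A_1^{-1}B_1^T+\Newr A_2$) and of $\News,\Newt$ relative to $\Newp$ (where the cubic $\mu^3-\mu^2-2\mu+1$ with roots $2\cos(k\pi/7)$, $k=1,3,5$, enters), all of which may be lifted from Theorem~\ref{3by3C=0}. The only genuinely new points are that the extra term $\tfrac{1}{\mu}x_3^TA_3x_3$ carries the sign of $\mu$, which in each of the four relevant cases agrees with the definite sign of $\Psi$, and that positive definiteness of $S_2$ alone --- rather than full row rank of $B_2$, which was needed when $A_3=0$ --- suffices to deduce $x_3=0$ from $B_2^Tx_3=0$ and $A_3x_3=0$. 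As a consistency check, this also explains why the intervals $(2\cos(5\pi/7),-1)$ and $(\tfrac12(\sqrt5-1),1)$, excluded in Theorem~\ref{3by3C=0}, become admissible here: they are exactly the regimes where $\operatorname{sign}(\Psi)$ and $\operatorname{sign}(\mu)$ disagree, so the $A_3$-contribution can cancel $x_3^T\Psi x_3$.
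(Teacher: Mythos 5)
Your proof is correct and is exactly what the paper intends: the text states the result follows by applying the structure of the proof of Theorem~\ref{3by3C=0} but does not write out the details, and your argument supplies those details faithfully -- tracking the extra $A_3$ contribution through to the quadratic form $x_3^T\Psi x_3+\tfrac{1}{\mu}x_3^TA_3x_3$, checking that $\operatorname{sign}(1/\mu)$ agrees with the definiteness sign of $\Psi$ on each of the four components of the complement, and correctly noting that positive definiteness of $S_2$ (rather than full row rank of $B_2$) is what closes the degenerate case $B_2^Tx_3=0$, $A_3x_3=0$.
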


\begin{Rem}
The above result guarantees a fixed rate of convergence for preconditioned {\scshape minres} applied to systems of the form \eqref{calA} with the properties stated. In \cite{BrGr21}, the authors also demonstrate the effect of approximating the blocks $A_1$, $S_1$, $S_2$ within the block-diagonal preconditioner, further highlighting the effectiveness of this strategy. We also highlight \cite[Thm.~3.2]{BrGr21}, which analyzes the case $A_{2}=0$.
\end{Rem}

Along with block-diagonal preconditioners for \eqref{calA}, we may construct block-triangular preconditioners, stated and analyzed in the simple result below.
\begin{theorem}\label{BTPrec}
Let
\begin{equation*}
\ \mathcal{P}_L=\left(\begin{array}{ccc}
A_1 & 0 & 0 \\ B_{1} & -S_1 & 0 \\ 0 & B_{2} & S_2 \\
\end{array}\right),\quad\quad\mathcal{P}_U=\left(\begin{array}{ccc}
A_1 & B_{1}^T & 0 \\ 0 & -S_1 & B_{2}^T \\ 0 & 0 & S_2 \\
\end{array}\right),
\end{equation*}
with $\mathcal{A}$ as defined in \eqref{calA}, where $A_1$, $S_1=A_2+B_{1}A_1^{-1}B_{1}^T$, $S_2=A_3+B_{2}S_1^{-1}B_{2}^T$ are assumed to be invertible. Then all eigenvalues of $\mathcal{P}_L^{-1}\mathcal{A}$ and $\mathcal{P}_U^{-1}\mathcal{A}$ are equal to $1$.
\end{theorem}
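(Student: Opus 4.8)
The plan is to reduce the claim to the block $LDU$ (Schur-complement) factorization of $\mathcal{A}$ and to recognize $\mathcal{P}_L$ and $\mathcal{P}_U$ as its ``lower'' and ``upper'' parts. First I would eliminate the $(1,1)$-block: since $A_1$ is invertible,
\[
  \mathcal{A} =
  \begin{pmatrix} \eye & 0 & 0 \\ B_{1} A_1^{-1} & \eye & 0 \\ 0 & 0 & \eye \end{pmatrix}
  \begin{pmatrix} A_1 & 0 & 0 \\ 0 & -S_1 & B_{2}^T \\ 0 & B_{2} & A_3 \end{pmatrix}
  \begin{pmatrix} \eye & A_1^{-1} B_{1}^T & 0 \\ 0 & \eye & 0 \\ 0 & 0 & \eye \end{pmatrix},
\]
using $-A_2 - B_{1} A_1^{-1} B_{1}^T = -S_1$. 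Then, since $S_1$ is invertible, I would apply the same reduction to the trailing $2\times2$ block, whose Schur complement with respect to $-S_1$ is $A_3 + B_{2} S_1^{-1} B_{2}^T = S_2$. Concatenating the two steps yields $\mathcal{A} = \mathcal{L}\,\mathcal{D}\,\mathcal{U}$ with
\[
  \mathcal{L} = \begin{pmatrix} \eye & 0 & 0 \\ B_{1} A_1^{-1} & \eye & 0 \\ 0 & -B_{2} S_1^{-1} & \eye \end{pmatrix},\quad
  \mathcal{D} = \begin{pmatrix} A_1 & 0 & 0 \\ 0 & -S_1 & 0 \\ 0 & 0 & S_2 \end{pmatrix},\quad
  \mathcal{U} = \begin{pmatrix} \eye & A_1^{-1} B_{1}^T & 0 \\ 0 & \eye & -S_1^{-1} B_{2}^T \\ 0 & 0 & \eye \end{pmatrix}.
\]

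The second step is the (routine) bookkeeping: multiply out $\mathcal{L}\mathcal{D}$ and $\mathcal{D}\mathcal{U}$ and verify that the products are exactly $\mathcal{P}_L$ and $\mathcal{P}_U$, respectively; the original off-diagonal blocks reappear because $(B_{1} A_1^{-1})A_1 = B_{1}$, $(-B_{2} S_1^{-1})(-S_1) = B_{2}$, $(-S_1)(-S_1^{-1} B_{2}^T) = B_{2}^T$, and $A_1(A_1^{-1} B_{1}^T) = B_{1}^T$. Hence $\mathcal{P}_L = \mathcal{L}\mathcal{D}$ and $\mathcal{P}_U = \mathcal{D}\mathcal{U}$, both invertible since $A_1$, $S_1$, $S_2$ are, so
\[
  \mathcal{P}_L^{-1}\mathcal{A} = (\mathcal{L}\mathcal{D})^{-1}\mathcal{L}\mathcal{D}\mathcal{U} = \mathcal{U},
  \qquad
  \mathcal{P}_U^{-1}\mathcal{A} = (\mathcal{D}\mathcal{U})^{-1}\mathcal{L}\mathcal{D}\mathcal{U},
\]
and the latter is similar to $\mathcal{A}\mathcal{P}_U^{-1} = \mathcal{L}\mathcal{D}\mathcal{U}\mathcal{U}^{-1}\mathcal{D}^{-1} = \mathcal{L}$ (using that $M^{-1}N$ and $NM^{-1}$ share the same spectrum). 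Since $\mathcal{U}$ is block unit upper triangular and $\mathcal{L}$ is block unit lower triangular, both $\mathcal{P}_L^{-1}\mathcal{A}$ and $\mathcal{P}_U^{-1}\mathcal{A}$ have every eigenvalue equal to $1$, which is the claim.

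I do not expect a genuine obstacle here: the whole content sits in the existence of the block factorization, and the invertibility of $A_1$ and $S_1$ (needed to form the two Schur complements) together with that of $S_2$ (needed for $\mathcal{D}$, hence $\mathcal{P}_L$ and $\mathcal{P}_U$, to be nonsingular) is precisely what the hypotheses supply; no definiteness is used. The one point worth flagging — more for the ensuing discussion than for the proof — is that ``all eigenvalues equal $1$'' does not make $\mathcal{P}_L^{-1}\mathcal{A}$ equal to the identity: $\mathcal{U}$ differs from $\eye$ by a nilpotent matrix of index at most three, so in exact arithmetic a preconditioned Krylov method such as {\scshape gmres} terminates after at most three iterations rather than one.
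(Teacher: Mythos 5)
Your proof is correct and follows essentially the same route as the paper: you derive the block $LDU$ factorization $\mathcal{A}=\mathcal{L}\mathcal{D}\mathcal{U}$ and read off $\mathcal{P}_L^{-1}\mathcal{A}=\mathcal{U}$ and $\mathcal{A}\mathcal{P}_U^{-1}=\mathcal{L}$, then invoke similarity of $\mathcal{P}_U^{-1}\mathcal{A}$ and $\mathcal{A}\mathcal{P}_U^{-1}$, exactly as the paper does (the paper simply states the two triangular products as ``easily verified'' rather than exhibiting the factorization). Your closing remark about nilpotency of index at most three and exact-arithmetic {\scshape gmres} termination is a useful addition not spelled out in the paper.
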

\begin{proof}
It may easily be verified that
\begin{equation*}
\ \mathcal{P}_L^{-1}\mathcal{A}=\left(\begin{array}{ccc}
\eye & A_1^{-1}B_{1}^T & 0 \\ 0 & \eye & -S_1^{-1}B_{2}^T \\ 0 & 0 & \eye \\
\end{array}\right),\quad\quad\mathcal{A}\mathcal{P}_U^{-1}=\left(\begin{array}{ccc}
\eye & 0 & 0 \\ B_{1}A_1^{-1} & \eye & 0 \\ 0 & -B_{2}S_1^{-1} & \eye \\
\end{array}\right),
\end{equation*}
which gives the result (with the result for $\mathcal{P}_U^{-1}\mathcal{A}$ inferred using similarity of this matrix and $\mathcal{A}\mathcal{P}_U^{-1}$).
\end{proof}

We note Thm.~\ref{BTPrec} makes no assumptions on the properties of $B_{2}$, or the positive definiteness of $A_1$, $S_1$, $S_2$, unlike Thms. \ref{3by3C=0} and \ref{3by3}. However, we note the limitations of Thm.~\ref{BTPrec}: with a block-triangular preconditioner, diagonalizability of the preconditioned system is not given, and implementing this preconditioner requires a nonsymmetric iterative solver for which the eigenvalue distribution does not guarantee a fixed convergence rate.

It would also be possible to combine $P_D$, $P_L$, and $P_U$ into one symmetric positive definite preconditioner $P_{LDU} = P_L P_D^{-1} P_U$, an approach which we investigate separately in~\cite{PePo21} for the more general case of multiple saddle-point systems of block-tridiagonal form. The parallel implementation of the preconditioner $P_{LDU}$ would also be possible with the same libraries that we use in Section~\ref{sec:Implement}, but at the expense of a more involved code. Because we focus on how to use indirect linear algebra methods inside the sequential homotopy method here, we restrict the numerical results to the use of the simpler preconditioners $P_D$ and $P_L$.

\section{Implementation details}\label{sec:Implement}

Among all problems of the form~\eqref{eqn:PDECO_class}, we focus our attention in the remainder of this work to a family of nonlinear and possibly badly conditioned benchmark problems of the specific form (cf.~\cite{Lu17,PoBo21}):
\begin{equation}
  \label{eqn:model_problem}
  \begin{alignedat}{3}
    \min~&\frac{1}{2} \int_{\Omega} \abs{u - u_{\mathrm{d}}}^2 + \frac{\gamma}{2} \int_{\Omega} \abs{q}^2 \quad
    && \text{over } u \in H^1_0(\Omega), q \in L^2(\Omega)\\
    \text{s.t.} ~& \nabla \cdot \left(\left[a + b \abs{u}^2\right] \nabla
    u\right) = q, \quad
    q_{\mathrm{l}} \le q \le q_{\mathrm{u}},
  \end{alignedat}
\end{equation}
where $\Omega \subset \mathbb{R}^d$, $d=2, 3$, is a bounded domain with
Lipschitz boundary and $a, b, \gamma > 0$ with control bounds
$q_{\mathrm{l}}, q_{\mathrm{u}} \in L^s(\Omega), s \in (2, \infty]$, and a tracking target function $u_{\mathrm{d}} \in L^2(\Omega)$. The difficulty of problem~\eqref{eqn:model_problem} can be tuned by the scalar parameters: smaller $\gamma$ and $a$ result in worse conditioning of the problem, while larger $b$ increases the nonlinearity. We caution that the interplay with the objective function is nontrivial: small values of $a$ in combination with large values of $b$ might lead to an optimal solution with small $|u|$ over $\Omega$, eventually resulting in less nonlinearity in the neighborhood of the optimal solution. We focus on $\gamma = 10^{-6}$, $a = 10^{-2}$, $b = 10^{2}$, which was experienced as the most difficult instance in the numerical results in~\cite{PoBo21}. All remaining problem data was set to the values in~\cite{PoBo21}.

After discretization of problem~\eqref{eqn:model_problem} with P1 finite elements, the application of Alg.~\ref{alg:iseqhom} leads to a sequence of linear systems of the form~\eqref{Ahk}. Eliminating the first block row, we arrive at a double saddle-point system of the form~\eqref{calA}, with
\begin{align*}
\ &A_1=\lambda \tilde{M}_Q + \tilde{H}_Q, ~~ A_2=\frac{\lambda}{1 + \rho \lambda} M_Y, ~~ A_3=\lambda M_U + H_U, ~~ B_1=\tilde{G}_Q, ~~ B_2=G_U^T,\\
\ &S_1=\frac{\lambda}{1 + \rho \lambda} M_Y+B_1 (\lambda \tilde{M}_Q + \tilde{H}_Q)^{-1}B_1^T, \quad S_2=\lambda M_U + H_U+B_2 S_1^{-1} B_2^T.
\end{align*}
In order to efficiently apply the block preconditioners derived above, we need to approximately apply the inverse operations of $A_1$, $S_1$, $S_2$ to generic vectors, in a computationally efficient way. We note that an alternative strategy would be to apply preconditioners to a re-ordered system of (classical) saddle-point form (see \cite{BGL05,MGW00}), however for problems with the structures considered, the leading block of the matrix is likely to possess a more problematic structure, with implications on its preconditioned approximation as well as that of the resulting Schur complement.

Numerically, we compare a number of increasingly efficient preconditioner choices, which we elaborate on in more detail below:
\begin{remunerate}
  \item \emph{Direct} sparse decomposition of $\mathcal{A}$ without the use of Krylov-subspace solver, which can be considered as a preconditioner that leads to convergence in one step.
  \item \emph{Basic Schur complement} block-diagonal preconditioner with sparse decomposition of $A_1$, $\hat{S}_1$, $\hat{S}_2$, where $\hat{S}_i$ approximates $S_i$. For empty active sets, the approximations are exact.
  \item \emph{Matching Schur complement} block-diagonal preconditioner with sparse decomposition of $A_1$ and $\hat{S}_1$ in combination with a matching approach for $\hat{S}_2$ using sparse decompositions for the factors of the multiplicative approximation of $\hat{S}_2$.
  \item \emph{AMG[$\hat{S}_1$] matching Schur complement} block-diagonal preconditioner, which coincides with the matching Schur complement preconditioner, except using an Algebraic Multigrid (AMG) approximation for $\hat{S}_1^{-1}$ instead of a direct decomposition.
  \item \emph{Decomposition-free Schur complement} block-diagonal preconditioner with completely decom\-position-free approximations of all Schur complements $A_1$, $S_1$, $S_2$ based on matching for $\hat{S}_2$. The preconditioner can be implemented using publicly-available, parallelizable preconditioners.
  \item \emph{Block-triangular} decomposition-free Schur complement preconditioner. In contrast to the previous preconditioners, this preconditioner must be used with a nonsymmetric solver such as {\scshape gmres} instead of {\scshape minres} because it is a block lower-triangular and not a symmetric preconditioner.
\end{remunerate}

Each preconditioner in the list above serves as the baseline for the next in terms of number of nonlinear iterations, which are expected to grow from top to bottom, and the resulting computation time. 
We use FEniCS~\cite{AlnaesBlechta2015a,LoggWellsEtAl2012a} to discretize problem~\eqref{eqn:model_problem} with P1 finite elements and implemented all preconditioners in PETSc~\cite{petsc-user-ref,petsc-web-page,petsc-efficient}.

We do not provide further details for preconditioner choices 1 and 6 because the application of a direct sparse decomposition is straightforward and choice 6 is just the block-triangular version of the block-diagonal preconditioner of choice 5. In the remainder of this section, we elaborate on the choices 2--5.

\subsection{Basic Schur complement preconditioner}

We first recall the inner product matrices: the matrix $M_Q$ is the diagonal part of a FE mass matrix $M_{\mathrm{FE}}$.
The matrices $M_U = M_Y$ are FE stiffness matrices. All of these can be easily assembled.
Thus, $\tilde{M}_Q = \eye_{\overline{\mathbb{A}}}^T M_Q \eye_{\overline{\mathbb{A}}}$, 
$\tilde{H}_Q = \gamma \eye_{\overline{\mathbb{A}}}^T M_{\mathrm{FE}} \eye_{\overline{\mathbb{A}}}$, 
and $\tilde{G}_Q = M_{\mathrm{FE}} \eye_{\overline{\mathbb{A}}}$. For the first Schur complement we obtain the explicit expressions
\begin{align*}
  A_1 &= \lambda \tilde{M}_Q + \tilde{H}_Q, \quad B_1 = \tilde{G}_Q, \quad A_2 = \frac{\lambda}{1 + \rho \lambda} M_Y, \\ 
  S_1 &= \frac{\lambda}{1 + \rho \lambda} M_Y + M_{\mathrm{FE}} \eye_{\overline{\mathbb{A}}} \left( \eye_{\overline{\mathbb{A}}}^T (\lambda M_Q + \gamma M_{\mathrm{FE}}) \eye_{\overline{\mathbb{A}}} \right)^{-1} \eye_{\overline{\mathbb{A}}}^T M_{\mathrm{FE}}.
\end{align*}
We observe that for $\lambda = 0$, the Schur complement $S_1$ has a nontrivial kernel spanned by $M_{\mathrm{FE}}^{-1} \eye_{\mathbb{A}}$, while it is equal to $(1/\gamma) M_{\mathrm{FE}} \eye_{\overline{\mathbb{A}}}$ on the orthogonal complement, which is spanned by $\eye_{\overline{\mathbb{A}}}$. Based on these comments, we propose approximating $M_{\mathrm{FE}}$ on each occasion in $S_1$ by its diagonal $M_Q$ (independently of the active set $\mathbb{A}$) to derive a basic Schur complement preconditioner using the approximate Schur complement
\[
  \hat{S}_1 = \frac{\lambda}{1 + \rho \lambda} M_Y + \frac{1}{\lambda + \gamma} M_Q.
\]
We remark that the stiffness term $M_Y$ dominates $\hat{S}_1$ for large $\lambda$ and for finer meshes with fixed (but possibly small) $\lambda$.
We can use direct sparse decompositions of $A_1$ and $\hat{S}_1$ to compute the action of their inverses to high accuracy.

For the basic Schur complement preconditioner, we approximate $S_2$ using the approximate first Schur complement
\[
  \hat{S}_2 = A_2 + B_2 \hat{S}_1^{-1} B_2^T.
\]
Regarding the explicit structure of $\hat{S}_2$, we first note that $B_2$ is a nonsymmetric FE matrix for the linearized elliptic PDE operator with respect to the state.
The direct assembly of $\hat{S}_2$ is prohibitive because $\hat{S}_2$ is a dense matrix. However, the action of $\hat{S}_2^{-1}$ can be computed by unrolling the Schur complement through a direct sparse decomposition of the sparse block matrix
\[
  \begin{pmatrix}
    \lambda M_U + H_U & B_2 \\
    B_2^T & -\hat{S}_1
  \end{pmatrix}.
\]
We investigate this preconditioner purely to assess the quality of the Schur complement approach in general and as a baseline for the following Schur complement approximations. We discourage the use of the basic Schur complement preconditioner for the solution of~\eqref{eqn:model_problem}.

\subsection{Matching Schur complement preconditioner}
We now address the approximation of the most delicate Schur complement $\hat{S}_2$ by a matching strategy, which has found considerable utility for PDE-constrained optimization problems \cite{PSW12,PW12}, including those with additional bound constraints \cite{PG17,PSW14}. A key challenge is that inverses of matrix sums are much harder to apply than inverses of matrix products.

For the problem at hand, the second Schur complement has the explicit form
\begin{align*}
  \hat{S}_2
  &= \lambda M_U + H_U + B_2 \hat{S}_1^{-1} B_2^T\\
  &= \lambda M_U + H_U + B_2 \left[ (1 + \rho \lambda)^{-1} \lambda M_Y + (\lambda + \gamma)^{-1} M_Q \right]^{-1} B_2^T.
\end{align*}
We now approximate $\hat{S}_2$ by a product $D \hat{S}_1^{-1} D^T$, where
$D = \alpha_1 M_U + \alpha_2 M_{\mathrm{FE}} + B_2$ such that
\[
\left( \alpha_1 M_U + \alpha_2 M_{\mathrm{FE}} \right) \hat{S}_1^{-1} \left( \alpha_1 M_U + \alpha_2 M_{\mathrm{FE}} \right) \approx \lambda {M}_U + {H}_U,
\]
and we wish to \emph{match} the factors $\alpha_1, \alpha_2 > 0$. As $H_U=M_{\mathrm{FE}}+N_U$, where $N_U$ contains nonlinear terms arising from the PDE operators, a computationally efficient choice is to neglect the effect of $N_U$, and match the terms in $M_Y$, $M_{\mathrm{FE}}$ separately. This heuristic strategy means we wish to select
\begin{equation*}
\begin{array}{rcl}
\left( \alpha_1 M_U \right) \left[ (1 + \rho \lambda)^{-1} \lambda M_Y \right]^{-1} \left( \alpha_1 M_U \right) = \lambda M_U & \Rightarrow & \alpha_1 = \frac{\lambda}{\sqrt{1 + \rho \lambda}}, \\
\left( \alpha_2 M_{\mathrm{FE}} \right) \left[ (\lambda + \gamma)^{-1} M_Q \right]^{-1} \left( \alpha_2 M_{\mathrm{FE}} \right) \approx M_{\mathrm{FE}} & \Rightarrow & \alpha_2 = \frac{1}{\sqrt{\lambda + \gamma}},
\end{array}
\end{equation*}
exploiting that $M_U=M_Y$ and $M_Q$ is the diagonal part of $M_\mathrm{FE}$.
Note that we can readily assemble $D = \lambda (1 + \rho \lambda)^{-1/2} M_U + (\lambda + \gamma)^{-1/2} M_{\mathrm{FE}} + B_2$ and use a direct sparse decomposition of $D$ to approximate $\hat{S}_2^{-1}$ with $\smash[t]{\hat{S}}_2^{-1} = D^{-T} \hat{S}_1 D^{-1}$.

We highlight that the structure of certain terms within $\hat{S}_2$, and hence the effectiveness of the resulting approximations, are dependent on the problem structure considered. For instance, the matrix $B_2^T$ is dependent on the form of the (linearized) differential operator, and the regularization terms within the objective function will lead to rank-deficient matrices when (for instance) boundary control problems are considered; these structures will then arise within the preconditioner. We believe the complicated, highly nonlinear benchmark problem considered provides a challenging and realistic test for our proposed preconditioning strategy.

\subsection{AMG[$\hat{S}_1$] matching Schur complement preconditioner}
This approach extends the matching Schur complement preconditioner by using an AMG preconditioner to approximate $\hat{S}_1$. We use \texttt{hypre/Boo\-merAMG}~\cite{FaYa02,HeYa02} with two sweeps of a V-cycle and one Jacobi iteration for each pre- and post-smoothing step.

\subsection{Decomposition-free Schur complement preconditioner}

This preconditioner further extends the AMG[$\hat{S}_1$] matching Schur complement preconditioner.
The matrix $A_1$ is a FE mass matrix, so may be well approximated by its diagonal \cite{Wathen87}. The use of a nested Conjugate Gradient ({\scshape cg}) solver for $A_1$ would require the use of a flexible outer Krylov-subspace solver such as flexible {\scshape gmres}~\cite{Sa93} due to the nonlinearity of {\scshape cg}. Instead, we apply a fixed number of 15 linear Chebyshev semi-iterations (see \cite{GoVa61a,GoVa61b,WR09}) on the diagonally-scaled $A_1$ with the optimal spectral bounds $[\frac{1}{2}, 2]$ (2D) and $[\frac{1}{2}, \frac{5}{2}$] (3D) from~\cite{Wathen87}, which are independent of the value of $\lambda$. 
Finally we also employ an AMG preconditioner for approximating the inverses of $D$ and $D^T$. We use \texttt{hypre/BoomerAMG} again with 4 V-cycle sweeps with two Jacobi sweeps for pre- and post-smoothing and a fixed relaxation weight of $0.7$. It is crucial to disable CF-relaxation for \texttt{BoomerAMG} to work correctly for application of the transposed preconditioner (see~\cite[p.~133]{hypre_documentation}).

\section{Numerical results}\label{sec:Results}

We solve problem~\eqref{eqn:model_problem} using the sequential homotopy method with the algorithmic parameters from~\cite{PoBo21}, with the changes described above and the Schur complement approximations of Section~\ref{sec:Implement}, applying block-diagonally preconditioned {\scshape minres} and block-lower-triangularly preconditioned {\scshape gmres}. A maximum number of 200 Krylov method iterations is used. When there is no convergence within this number of iterations, it is an indication that the current matrix $\hat{S}_2$ might be indefinite. In this case, the nonlinear step is flagged as failed and the sequential homotopy method increases $\lambda$, which eventually renders $\hat{S}_2$ positive definite. For the simplified semismooth Newton step, we prescribe the same number of Krylov method iterations that were adaptively determined by the standard termination criterion in the preceeding semismooth Newton step. When a sequential homotopy iteration fails due to excess of Krylov method iterations or due to violation of the monotonicity test, we mark the iteration with a black cross in the following figures.\footnote{All results were computed on a 2$\times$32-core AMD EPYC 7452 workstation with 256 GB RAM running GNU/Linux.}

We perform numerical experiments for P1 FE discretizations on triangular meshes of the unit square/cube with a varying number $N$ of elements per side.

\begin{figure}[bp]
  \centering
\begin{tikzpicture}

\definecolor{crimson2143940}{RGB}{214,39,40}
\definecolor{darkgray176}{RGB}{176,176,176}
\definecolor{darkorange25512714}{RGB}{255,127,14}
\definecolor{forestgreen4416044}{RGB}{44,160,44}
\definecolor{lightgray204}{RGB}{204,204,204}
\definecolor{steelblue31119180}{RGB}{31,119,180}

\begin{axis}[
height=0.3\textheight,
legend cell align={left},
legend style={
  fill opacity=0.8,
  draw opacity=1,
  text opacity=1,
  at={(0.97,0.03)},
  anchor=south east,
  draw=lightgray204
},
tick align=outside,
tick pos=left,
width=\textwidth,
x grid style={darkgray176},
xlabel={Outer nonlinear iteration \(\displaystyle k\)},
xmin=-5.2, xmax=109.2,
xtick style={color=black},
y grid style={darkgray176},
ylabel={Inner Krylov method iterations},
ymin=-4.05, ymax=85.05,
ytick style={color=black}
]
\addplot [semithick, steelblue31119180, mark=*, mark size=2, mark options={solid}]
table {%
0 3
1 5
2 9
3 10
4 14
5 16
6 18
7 21
8 23
9 17
10 29
11 29
12 31
13 22
14 36
15 46
16 34
17 31
18 32
19 61
20 63
21 57
22 55
23 50
24 58
25 58
26 36
27 65
28 36
29 66
30 36
31 36
32 36
33 34
34 34
35 34
36 34
37 36
38 36
39 34
40 41
41 63
42 34
43 37
44 37
45 36
46 36
47 34
48 38
49 68
50 42
51 35
52 35
53 36
54 31
55 33
56 33
57 28
58 39
59 70
60 65
61 65
62 63
};
\addlegendentry{$N = 40$}
\addplot [semithick, black, mark=x, mark size=2, mark options={solid}, only marks, forget plot]
table {%
21 57
36 34
39 34
};
\addplot [semithick, darkorange25512714, mark=*, mark size=2, mark options={solid}]
table {%
0 3
1 5
2 10
3 10
4 12
5 17
6 14
7 21
8 25
9 18
10 29
11 33
12 26
13 22
14 29
15 27
16 29
17 30
18 29
19 31
20 63
21 63
22 65
23 65
24 66
25 67
26 68
27 33
28 33
29 33
30 33
31 33
32 34
33 38
34 38
35 38
36 40
37 38
38 43
39 36
40 41
41 38
42 36
43 36
44 36
45 35
46 35
47 33
48 32
49 34
50 34
51 36
52 31
53 79
54 24
55 25
56 27
57 30
58 30
59 33
60 73
61 78
62 71
63 70
64 69
65 69
};
\addlegendentry{$N = 80$}
\addplot [semithick, black, mark=x, mark size=2, mark options={solid}, only marks, forget plot]
table {%
32 34
35 38
39 36
55 25
56 27
57 30
58 30
59 33
};
\addplot [semithick, forestgreen4416044, mark=*, mark size=2, mark options={solid}]
table {%
0 3
1 5
2 9
3 12
4 12
5 14
6 14
7 23
8 19
9 19
10 18
11 20
12 28
13 26
14 21
15 24
16 24
17 26
18 26
19 26
20 29
21 63
22 65
23 65
24 66
25 66
26 67
27 31
28 30
29 31
30 31
31 30
32 31
33 31
34 32
35 36
36 32
37 30
38 30
39 31
40 32
41 32
42 32
43 30
44 24
45 30
46 31
47 31
48 32
49 34
50 81
51 70
52 67
53 65
54 65
55 65
};
\addlegendentry{$N = 160$}
\addplot [semithick, black, mark=x, mark size=2, mark options={solid}, only marks, forget plot]
table {%
32 31
};
\addplot [semithick, crimson2143940, mark=*, mark size=2, mark options={solid}]
table {%
0 3
1 6
2 7
3 7
4 9
5 10
6 12
7 14
8 17
9 17
10 18
11 21
12 20
13 21
14 19
15 21
16 21
17 22
18 23
19 23
20 24
21 27
22 65
23 65
24 65
25 67
26 67
27 67
28 26
29 26
30 26
31 26
32 27
33 44
34 53
35 45
36 41
37 46
38 47
39 49
40 47
41 43
42 40
43 37
44 32
45 24
46 31
47 37
48 27
49 0
50 0
51 0
52 10
53 12
54 7
55 9
56 10
57 10
58 12
59 14
60 17
61 20
62 23
63 31
64 37
65 44
66 49
67 34
68 32
69 32
70 34
71 31
72 31
73 33
74 44
75 40
76 55
77 39
78 41
79 40
80 32
81 33
82 33
83 33
84 31
85 35
86 30
87 28
88 27
89 27
90 22
91 30
92 28
93 33
94 26
95 26
96 29
97 32
98 39
99 35
100 74
101 69
102 69
103 68
104 68
};
\addlegendentry{$N = 320$}
\addplot [semithick, black, mark=x, mark size=2, mark options={solid}, only marks, forget plot]
table {%
33 44
35 45
36 41
37 46
38 47
39 49
40 47
41 43
42 40
43 37
44 32
47 37
49 0
50 0
51 0
53 12
74 44
76 55
77 39
96 29
};
\end{axis}

\end{tikzpicture}
  \vspace{-1.5em}
  \caption{Krylov solver iterations: {\scshape minres} with the basic Schur complement preconditioner. The case $N=640$ is omitted due to excessive runtime.}
  \label{fig:ksp_iters_minres_exact_S1_S2}
\end{figure}
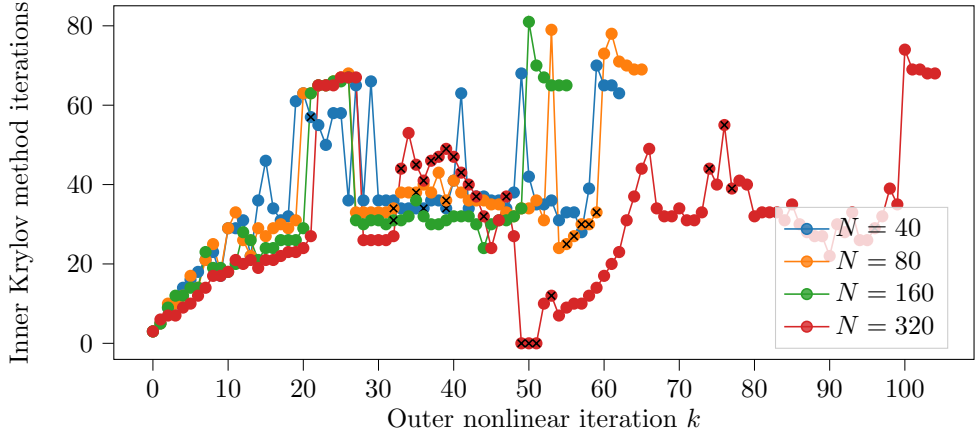

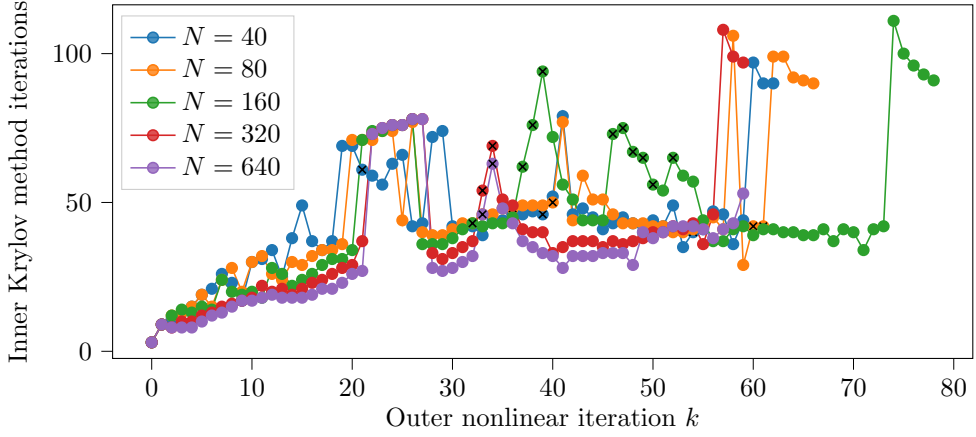
\begin{figure}[bp]
  \centering
\begin{tikzpicture}

\definecolor{crimson2143940}{RGB}{214,39,40}
\definecolor{darkgray176}{RGB}{176,176,176}
\definecolor{darkorange25512714}{RGB}{255,127,14}
\definecolor{forestgreen4416044}{RGB}{44,160,44}
\definecolor{lightgray204}{RGB}{204,204,204}
\definecolor{mediumpurple148103189}{RGB}{148,103,189}
\definecolor{steelblue31119180}{RGB}{31,119,180}

\begin{axis}[
height=0.3\textheight,
legend cell align={left},
legend style={
  fill opacity=0.8,
  draw opacity=1,
  text opacity=1,
  at={(0.01,0.97)},
  anchor=north west,
  draw=lightgray204
},
tick align=outside,
tick pos=left,
width=\textwidth,
x grid style={darkgray176},
xlabel={Outer nonlinear iteration \(\displaystyle k\)},
xmin=-3.9, xmax=81.9,
xtick style={color=black},
y grid style={darkgray176},
ylabel={Inner Krylov method iterations},
ymin=-2.4, ymax=116.4,
ytick style={color=black}
]
\addplot [semithick, steelblue31119180, mark=*, mark size=2, mark options={solid}]
table {%
0 3
1 9
2 11
3 13
4 15
5 19
6 21
7 26
8 23
9 17
10 30
11 31
12 34
13 25
14 38
15 49
16 37
17 34
18 37
19 69
20 69
21 61
22 59
23 56
24 63
25 66
26 42
27 43
28 72
29 74
30 42
31 42
32 42
33 39
34 44
35 44
36 46
37 46
38 47
39 46
40 52
41 79
42 46
43 48
44 45
45 41
46 43
47 45
48 43
49 40
50 44
51 42
52 49
53 35
54 40
55 41
56 47
57 46
58 36
59 44
60 97
61 90
62 90
};
\addlegendentry{$N = 40$}
\addplot [semithick, black, mark=x, mark size=2, mark options={solid}, only marks, forget plot]
table {%
21 61
36 46
39 46
54 40
};
\addplot [semithick, darkorange25512714, mark=*, mark size=2, mark options={solid}]
table {%
0 3
1 9
2 12
3 13
4 15
5 19
6 15
7 24
8 28
9 20
10 30
11 32
12 26
13 24
14 30
15 29
16 32
17 34
18 34
19 36
20 71
21 71
22 71
23 74
24 74
25 44
26 77
27 40
28 39
29 39
30 41
31 43
32 43
33 46
34 46
35 48
36 48
37 49
38 49
39 49
40 50
41 77
42 44
43 59
44 51
45 51
46 46
47 43
48 43
49 43
50 42
51 42
52 40
53 40
54 41
55 43
56 45
57 41
58 106
59 29
60 42
61 42
62 99
63 99
64 92
65 91
66 90
};
\addlegendentry{$N = 80$}
\addplot [semithick, black, mark=x, mark size=2, mark options={solid}, only marks, forget plot]
table {%
32 43
35 48
36 48
40 50
60 42
61 42
};
\addplot [semithick, forestgreen4416044, mark=*, mark size=2, mark options={solid}]
table {%
0 3
1 9
2 12
3 14
4 13
5 15
6 14
7 24
8 20
9 19
10 20
11 18
12 28
13 26
14 22
15 24
16 26
17 29
18 31
19 31
20 34
21 71
22 74
23 74
24 76
25 76
26 78
27 36
28 36
29 36
30 38
31 41
32 43
33 42
34 43
35 43
36 45
37 62
38 76
39 94
40 72
41 56
42 51
43 44
44 44
45 44
46 73
47 75
48 67
49 65
50 56
51 54
52 65
53 59
54 57
55 44
56 37
57 37
58 41
59 42
60 39
61 41
62 41
63 40
64 40
65 39
66 39
67 41
68 37
69 41
70 40
71 34
72 41
73 42
74 111
75 100
76 96
77 93
78 91
};
\addlegendentry{$N = 160$}
\addplot [semithick, black, mark=x, mark size=2, mark options={solid}, only marks, forget plot]
table {%
32 43
37 62
38 76
39 94
46 73
47 75
48 67
49 65
50 56
52 65
};
\addplot [semithick, crimson2143940, mark=*, mark size=2, mark options={solid}]
table {%
0 3
1 9
2 8
3 10
4 10
5 12
6 13
7 15
8 16
9 17
10 18
11 22
12 20
13 21
14 19
15 21
16 23
17 24
18 26
19 28
20 29
21 37
22 73
23 75
24 76
25 76
26 78
27 78
28 33
29 31
30 33
31 35
32 37
33 54
34 69
35 51
36 49
37 41
38 40
39 40
40 33
41 35
42 37
43 37
44 37
45 35
46 37
47 36
48 37
49 38
50 40
51 40
52 42
53 41
54 43
55 36
56 46
57 108
58 99
59 97
};
\addlegendentry{$N = 320$}
\addplot [semithick, black, mark=x, mark size=2, mark options={solid}, only marks, forget plot]
table {%
33 54
34 69
};
\addplot [semithick, mediumpurple148103189, mark=*, mark size=2, mark options={solid}]
table {%
0 3
1 9
2 8
3 8
4 8
5 10
6 12
7 13
8 15
9 17
10 17
11 18
12 19
13 18
14 18
15 18
16 19
17 21
18 21
19 23
20 26
21 27
22 73
23 75
24 76
25 76
26 78
27 78
28 28
29 27
30 28
31 30
32 32
33 46
34 63
35 48
36 43
37 37
38 35
39 33
40 32
41 28
42 32
43 32
44 32
45 33
46 33
47 33
48 29
49 40
50 38
51 40
52 42
53 42
54 42
55 41
56 38
57 41
58 43
59 53
};
\addlegendentry{$N = 640$}
\addplot [semithick, black, mark=x, mark size=2, mark options={solid}, only marks, forget plot]
table {%
33 46
34 63
};
\end{axis}

\end{tikzpicture}
  \vspace{-1.5em}
  \caption{Krylov solver iterations: {\scshape minres} with the matching Schur complement preconditioner.}
  \label{fig:ksp_iters_minres_lu_S1_match_S2}
\end{figure}    

\begin{figure}[tbp]
  \centering
\begin{tikzpicture}

\definecolor{crimson2143940}{RGB}{214,39,40}
\definecolor{darkgray176}{RGB}{176,176,176}
\definecolor{darkorange25512714}{RGB}{255,127,14}
\definecolor{forestgreen4416044}{RGB}{44,160,44}
\definecolor{lightgray204}{RGB}{204,204,204}
\definecolor{mediumpurple148103189}{RGB}{148,103,189}
\definecolor{steelblue31119180}{RGB}{31,119,180}

\begin{axis}[
height=0.3\textheight,
legend cell align={left},
legend style={
  fill opacity=0.8,
  draw opacity=1,
  text opacity=1,
  at={(0.01,0.97)},
  anchor=north west,
  draw=lightgray204
},
tick align=outside,
tick pos=left,
width=\textwidth,
x grid style={darkgray176},
xlabel={Outer nonlinear iteration \(\displaystyle k\)},
xmin=-4.7, xmax=98.7,
xtick style={color=black},
y grid style={darkgray176},
ylabel={Inner Krylov method iterations},
ymin=-3.3, ymax=135.3,
ytick style={color=black}
]
\addplot [semithick, steelblue31119180, mark=*, mark size=2, mark options={solid}]
table {%
0 3
1 9
2 13
3 13
4 15
5 19
6 22
7 27
8 24
9 17
10 30
11 34
12 35
13 25
14 38
15 51
16 37
17 34
18 37
19 69
20 71
21 61
22 59
23 56
24 64
25 66
26 42
27 43
28 74
29 76
30 42
31 42
32 42
33 39
34 44
35 44
36 44
37 46
38 49
39 48
40 55
41 85
42 46
43 42
44 43
45 43
46 43
47 41
48 44
49 40
50 42
51 39
52 43
53 39
54 41
55 39
56 99
57 86
58 83
59 82
60 81
61 81
};
\addlegendentry{$N = 40$}
\addplot [semithick, black, mark=x, mark size=2, mark options={solid}, only marks, forget plot]
table {%
21 61
36 44
39 48
};
\addplot [semithick, darkorange25512714, mark=*, mark size=2, mark options={solid}]
table {%
0 3
1 9
2 13
3 15
4 15
5 19
6 15
7 25
8 30
9 20
10 31
11 33
12 27
13 24
14 30
15 29
16 32
17 34
18 34
19 36
20 71
21 73
22 73
23 76
24 76
25 44
26 78
27 39
28 39
29 39
30 41
31 44
32 48
33 45
34 44
35 46
36 49
37 58
38 56
39 60
40 57
41 62
42 73
43 79
44 51
45 43
46 38
47 89
48 61
49 45
50 24
51 22
52 19
53 39
54 46
55 51
56 46
57 44
58 67
59 44
60 49
61 44
62 44
63 45
64 44
65 44
66 41
67 43
68 49
69 50
70 51
71 48
72 56
73 103
74 102
75 29
76 40
77 37
78 37
79 40
80 43
81 43
82 44
83 88
84 83
85 79
86 75
87 78
88 88
89 103
90 104
91 105
92 103
93 99
94 94
};
\addlegendentry{$N = 80$}
\addplot [semithick, black, mark=x, mark size=2, mark options={solid}, only marks, forget plot]
table {%
32 48
37 58
39 60
40 57
41 62
42 73
43 79
44 51
45 43
47 89
48 61
49 45
69 50
76 40
77 37
78 37
79 40
80 43
81 43
82 44
83 88
84 83
85 79
};
\addplot [semithick, forestgreen4416044, mark=*, mark size=2, mark options={solid}]
table {%
0 3
1 9
2 13
3 14
4 15
5 15
6 14
7 26
8 19
9 20
10 20
11 20
12 28
13 26
14 21
15 24
16 26
17 29
18 31
19 31
20 34
21 71
22 74
23 76
24 76
25 78
26 79
27 37
28 36
29 36
30 38
31 41
32 43
33 42
34 41
35 47
36 48
37 48
38 41
39 40
40 40
41 40
42 39
43 39
44 39
45 41
46 40
47 40
48 42
49 42
50 112
51 21
52 23
53 99
54 96
55 93
56 91
57 89
58 89
};
\addlegendentry{$N = 160$}
\addplot [semithick, black, mark=x, mark size=2, mark options={solid}, only marks, forget plot]
table {%
32 43
36 48
51 21
52 23
};
\addplot [semithick, crimson2143940, mark=*, mark size=2, mark options={solid}]
table {%
0 3
1 9
2 9
3 10
4 10
5 12
6 13
7 15
8 16
9 18
10 18
11 22
12 21
13 21
14 19
15 21
16 22
17 24
18 26
19 28
20 29
21 37
22 73
23 74
24 76
25 77
26 78
27 80
28 33
29 31
30 33
31 36
32 38
33 67
34 77
35 119
36 78
37 57
38 49
39 43
40 48
41 92
42 90
43 129
44 108
45 101
46 93
47 88
48 71
49 60
50 47
51 39
52 41
53 39
54 39
55 38
56 40
57 34
58 85
59 116
60 88
61 96
62 91
63 81
64 93
65 78
66 56
67 47
68 38
69 37
70 38
71 38
72 36
73 40
74 37
75 37
76 36
77 36
78 36
79 38
80 38
81 40
82 40
83 42
84 42
85 41
86 43
87 111
88 102
89 100
90 98
};
\addlegendentry{$N = 320$}
\addplot [semithick, black, mark=x, mark size=2, mark options={solid}, only marks, forget plot]
table {%
33 67
35 119
36 78
41 92
42 90
43 129
44 108
46 93
47 88
58 85
59 116
60 88
61 96
62 91
64 93
};
\addplot [semithick, mediumpurple148103189, mark=*, mark size=2, mark options={solid}]
table {%
0 3
1 9
2 8
3 8
4 8
5 10
6 12
7 13
8 15
9 17
10 18
11 18
12 19
13 18
14 18
15 18
16 19
17 21
18 21
19 23
20 26
21 27
22 74
23 76
24 76
25 76
26 78
27 78
28 28
29 27
30 28
31 30
32 33
33 56
34 77
35 64
36 43
37 37
38 37
39 35
40 32
41 32
42 32
43 32
44 27
45 34
46 33
47 35
48 35
49 37
50 32
51 39
52 42
53 34
54 28
55 21
56 32
57 39
58 39
59 41
60 75
61 31
62 31
63 37
64 41
65 56
66 103
67 97
68 95
69 94
};
\addlegendentry{$N = 640$}
\addplot [semithick, black, mark=x, mark size=2, mark options={solid}, only marks, forget plot]
table {%
33 56
34 77
55 21
56 32
57 39
62 31
63 37
};
\end{axis}

\end{tikzpicture}
  \vspace{-1.5em}
  \caption{Krylov solver iterations: {\scshape minres} with the decomposition-free Schur complement preconditioner.}
  \label{fig:ksp_iters_minres_lu_free}
\end{figure}
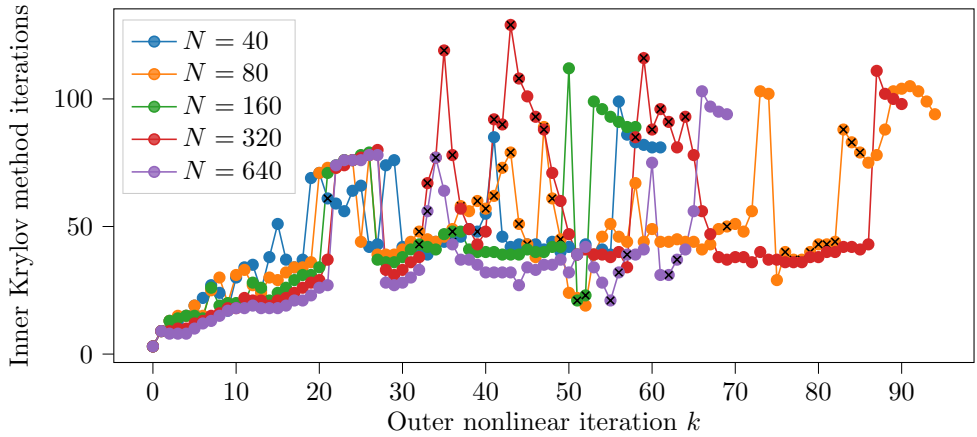

\begin{figure}[tbp]
  \centering
\begin{tikzpicture}

\definecolor{crimson2143940}{RGB}{214,39,40}
\definecolor{darkgray176}{RGB}{176,176,176}
\definecolor{darkorange25512714}{RGB}{255,127,14}
\definecolor{forestgreen4416044}{RGB}{44,160,44}
\definecolor{lightgray204}{RGB}{204,204,204}
\definecolor{mediumpurple148103189}{RGB}{148,103,189}
\definecolor{steelblue31119180}{RGB}{31,119,180}

\begin{axis}[
height=0.3\textheight,
legend cell align={left},
legend style={fill opacity=0.8, draw opacity=1, text opacity=1, at={(0.01,0.97)}, anchor=north west, draw=lightgray204},
tick align=outside,
tick pos=left,
width=\textwidth,
x grid style={darkgray176},
xlabel={Outer nonlinear iteration \(\displaystyle k\)},
xmin=-4.7, xmax=98.7,
xtick style={color=black},
y grid style={darkgray176},
ylabel={Inner Krylov method iterations},
ymin=-6.85, ymax=209.85,
ytick style={color=black}
]
\addplot [semithick, steelblue31119180, mark=*, mark size=2, mark options={solid}]
table {%
0 3
1 5
2 5
3 5
4 6
5 8
6 9
7 12
8 11
9 13
10 14
11 17
12 19
13 24
14 27
15 31
16 34
17 37
18 41
19 44
20 48
21 45
22 40
23 38
24 42
25 45
26 44
27 47
28 45
29 48
30 48
31 50
32 51
33 52
34 54
35 60
36 56
37 51
38 53
39 55
40 59
41 52
42 52
43 49
44 50
45 50
46 51
47 51
48 50
49 50
50 51
51 48
52 46
53 47
54 47
55 46
56 45
57 45
58 44
59 48
60 47
};
\addlegendentry{$N = 40$}
\addplot [semithick, black, mark=x, mark size=2, mark options={solid}, only marks, forget plot]
table {%
20 48
35 60
40 59
};
\addplot [semithick, darkorange25512714, mark=*, mark size=2, mark options={solid}]
table {%
0 3
1 5
2 5
3 5
4 6
5 8
6 9
7 11
8 14
9 14
10 15
11 16
12 19
13 21
14 26
15 31
16 34
17 38
18 41
19 44
20 46
21 50
22 50
23 49
24 48
25 49
26 50
27 52
28 53
29 53
30 57
31 61
32 67
33 62
34 62
35 60
36 57
37 58
38 96
39 60
40 56
41 65
42 66
43 66
44 60
45 61
46 62
47 61
48 61
49 60
50 58
51 57
52 55
53 54
54 54
55 52
56 51
57 51
58 51
59 50
60 50
};
\addlegendentry{$N = 80$}
\addplot [semithick, black, mark=x, mark size=2, mark options={solid}, only marks, forget plot]
table {%
32 67
35 60
38 96
};
\addplot [semithick, forestgreen4416044, mark=*, mark size=2, mark options={solid}]
table {%
0 3
1 5
2 5
3 5
4 6
5 8
6 9
7 11
8 12
9 15
10 17
11 17
12 20
13 22
14 25
15 30
16 33
17 36
18 40
19 43
20 45
21 47
22 50
23 50
24 50
25 50
26 50
27 52
28 53
29 53
30 57
31 60
32 63
33 59
34 62
35 66
36 78
37 68
38 60
39 61
40 61
41 61
42 63
43 64
44 62
45 62
46 60
47 59
48 58
49 58
50 57
51 54
52 54
53 51
54 50
55 49
};
\addlegendentry{$N = 160$}
\addplot [semithick, black, mark=x, mark size=2, mark options={solid}, only marks, forget plot]
table {%
32 63
36 78
};
\addplot [semithick, crimson2143940, mark=*, mark size=2, mark options={solid}]
table {%
0 3
1 5
2 5
3 5
4 7
5 8
6 9
7 11
8 12
9 15
10 17
11 20
12 20
13 23
14 25
15 29
16 32
17 35
18 39
19 42
20 45
21 47
22 49
23 50
24 50
25 50
26 50
27 51
28 53
29 54
30 57
31 59
32 63
33 153
34 151
35 88
36 191
37 156
38 182
39 161
40 141
41 123
42 93
43 79
44 61
45 50
46 36
47 38
48 30
49 20
50 23
51 25
52 23
53 30
54 33
55 38
56 51
57 58
58 65
59 73
60 85
61 83
62 200
63 200
64 200
65 200
66 200
67 200
68 165
69 67
70 52
71 47
72 46
73 50
74 52
75 54
76 56
77 58
78 60
79 63
80 63
81 63
82 65
83 64
84 61
85 61
86 61
87 60
88 59
89 55
90 54
91 51
92 50
93 48
94 49
};
\addlegendentry{$N = 320$}
\addplot [semithick, black, mark=x, mark size=2, mark options={solid}, only marks, forget plot]
table {%
33 153
37 156
38 182
39 161
40 141
41 123
42 93
43 79
44 61
45 50
48 30
62 200
63 200
64 200
65 200
66 200
67 200
};
\addplot [semithick, mediumpurple148103189, mark=*, mark size=2, mark options={solid}]
table {%
0 3
1 5
2 5
3 5
4 7
5 8
6 9
7 11
8 12
9 15
10 17
11 20
12 23
13 24
14 26
15 30
16 32
17 35
18 38
19 42
20 44
21 46
22 48
23 49
24 50
25 49
26 50
27 50
28 53
29 54
30 56
31 59
32 62
33 178
34 165
35 142
36 68
37 200
38 159
39 70
40 61
41 60
42 61
43 61
44 63
45 64
46 64
47 63
48 63
49 60
50 61
51 61
52 60
53 60
54 60
55 57
56 54
57 51
58 49
59 48
};
\addlegendentry{$N = 640$}
\addplot [semithick, black, mark=x, mark size=2, mark options={solid}, only marks, forget plot]
table {%
33 178
37 200
};
\end{axis}

\end{tikzpicture}
  \vspace{-1.5em}
  \caption{Krylov solver iterations: {\scshape gmres} with the block-triangular decomposition-free Schur complement preconditioner.}
  \label{fig:ksp_iters_gmres_block_triangular}
\end{figure}
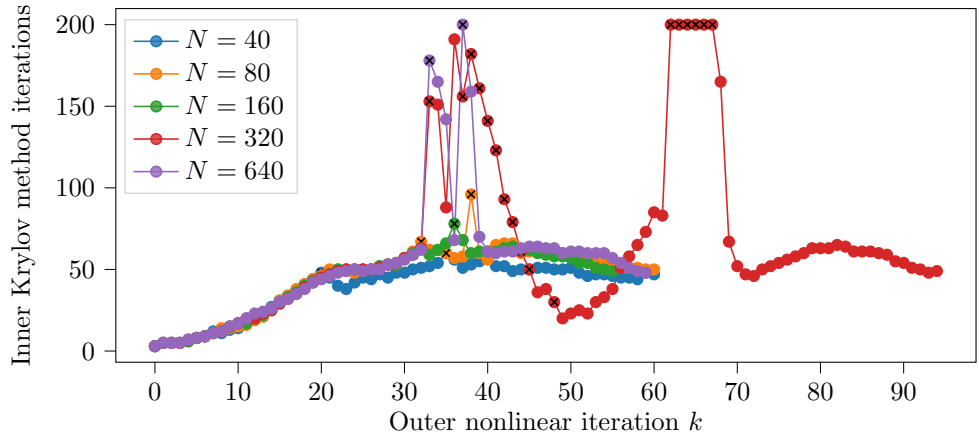

\begin{figure}[tbp]
  \centering
\begin{tikzpicture}

\definecolor{crimson2143940}{RGB}{214,39,40}
\definecolor{darkgray176}{RGB}{176,176,176}
\definecolor{darkorange25512714}{RGB}{255,127,14}
\definecolor{forestgreen4416044}{RGB}{44,160,44}
\definecolor{lightgray204}{RGB}{204,204,204}
\definecolor{mediumpurple148103189}{RGB}{148,103,189}
\definecolor{steelblue31119180}{RGB}{31,119,180}

\begin{axis}[
height=0.3\textheight,
legend cell align={left},
legend style={
  fill opacity=0.8,
  draw opacity=1,
  text opacity=1,
  at={(0.01,0.97)},
  anchor=north west,
  draw=lightgray204
},
tick align=outside,
tick pos=left,
width=\textwidth,
x grid style={darkgray176},
xlabel={Outer nonlinear iteration \(\displaystyle k\)},
xmin=-4.7, xmax=98.7,
xtick style={color=black},
y grid style={darkgray176},
ylabel={Inner Krylov method iterations},
ymin=-8.95, ymax=209.95,
ytick style={color=black}
]
\addplot [semithick, steelblue31119180, mark=*, mark size=2, mark options={solid}]
table {%
0 1
1 1
2 1
3 1
4 1
5 1
6 1
7 1
8 1
9 1
10 1
11 1
12 1
13 1
14 1
15 1
16 1
17 1
18 1
19 1
20 1
21 1
22 1
23 1
24 1
25 1
26 1
27 1
28 1
29 1
30 1
31 1
32 1
33 1
34 1
35 1
36 1
37 1
38 1
39 1
40 1
41 1
42 1
43 1
44 1
45 1
46 1
47 1
48 1
49 1
50 1
51 1
52 1
53 1
54 1
55 1
56 1
57 1
58 1
59 1
60 1
61 1
62 1
63 1
64 1
65 1
66 1
67 1
68 1
69 1
70 1
71 1
72 1
73 1
74 1
75 1
76 1
77 1
78 1
79 1
80 1
81 1
82 1
};
\addlegendentry{Direct}
\addplot [semithick, black, mark=x, mark size=2, mark options={solid}, only marks, forget plot]
table {%
33 1
37 1
38 1
39 1
40 1
41 1
42 1
43 1
44 1
45 1
47 1
48 1
};
\addplot [semithick, darkorange25512714, mark=*, mark size=2, mark options={solid}]
table {%
0 3
1 9
2 8
3 10
4 10
5 12
6 13
7 15
8 16
9 17
10 18
11 22
12 20
13 21
14 19
15 21
16 23
17 24
18 26
19 28
20 29
21 37
22 73
23 75
24 76
25 76
26 78
27 78
28 33
29 31
30 33
31 35
32 37
33 54
34 69
35 51
36 49
37 41
38 40
39 40
40 33
41 35
42 37
43 37
44 37
45 35
46 37
47 36
48 37
49 38
50 40
51 40
52 42
53 41
54 43
55 36
56 46
57 108
58 99
59 97
};
\addlegendentry{Matching}
\addplot [semithick, black, mark=x, mark size=2, mark options={solid}, only marks, forget plot]
table {%
33 54
34 69
};
\addplot [semithick, forestgreen4416044, mark=*, mark size=2, mark options={solid}]
table {%
0 3
1 9
2 9
3 10
4 10
5 12
6 13
7 15
8 16
9 18
10 18
11 22
12 21
13 21
14 19
15 21
16 22
17 24
18 26
19 28
20 29
21 37
22 73
23 74
24 75
25 76
26 78
27 78
28 33
29 31
30 33
31 35
32 37
33 54
34 53
35 50
36 46
37 41
38 40
39 40
40 43
41 54
42 49
43 43
44 40
45 38
46 37
47 37
48 35
49 38
50 34
51 38
52 40
53 40
54 38
55 39
56 41
57 42
58 46
59 43
60 109
61 102
62 100
63 99
};
\addlegendentry{AMG[$S_1$] matching}
\addplot [semithick, black, mark=x, mark size=2, mark options={solid}, only marks, forget plot]
table {%
33 54
34 53
40 43
};
\addplot [semithick, crimson2143940, mark=*, mark size=2, mark options={solid}]
table {%
0 3
1 9
2 9
3 10
4 10
5 12
6 13
7 15
8 16
9 18
10 18
11 22
12 21
13 21
14 19
15 21
16 22
17 24
18 26
19 28
20 29
21 37
22 73
23 74
24 76
25 77
26 78
27 80
28 33
29 31
30 33
31 36
32 38
33 67
34 77
35 119
36 78
37 57
38 49
39 43
40 48
41 92
42 90
43 129
44 108
45 101
46 93
47 88
48 71
49 60
50 47
51 39
52 41
53 39
54 39
55 38
56 40
57 34
58 85
59 116
60 88
61 96
62 91
63 81
64 93
65 78
66 56
67 47
68 38
69 37
70 38
71 38
72 36
73 40
74 37
75 37
76 36
77 36
78 36
79 38
80 38
81 40
82 40
83 42
84 42
85 41
86 43
87 111
88 102
89 100
90 98
};
\addlegendentry{Decomposition-free}
\addplot [semithick, black, mark=x, mark size=2, mark options={solid}, only marks, forget plot]
table {%
33 67
35 119
36 78
41 92
42 90
43 129
44 108
46 93
47 88
58 85
59 116
60 88
61 96
62 91
64 93
};
\addplot [semithick, mediumpurple148103189, mark=*, mark size=2, mark options={solid}]
table {%
0 3
1 5
2 5
3 5
4 7
5 8
6 9
7 11
8 12
9 15
10 17
11 20
12 20
13 23
14 25
15 29
16 32
17 35
18 39
19 42
20 45
21 47
22 49
23 50
24 50
25 50
26 50
27 51
28 53
29 54
30 57
31 59
32 63
33 153
34 151
35 88
36 191
37 156
38 182
39 161
40 141
41 123
42 93
43 79
44 61
45 50
46 36
47 38
48 30
49 20
50 23
51 25
52 23
53 30
54 33
55 38
56 51
57 58
58 65
59 73
60 85
61 83
62 200
63 200
64 200
65 200
66 200
67 200
68 165
69 67
70 52
71 47
72 46
73 50
74 52
75 54
76 56
77 58
78 60
79 63
80 63
81 63
82 65
83 64
84 61
85 61
86 61
87 60
88 59
89 55
90 54
91 51
92 50
93 48
94 49
};
\addlegendentry{Block-triangular}
\addplot [semithick, black, mark=x, mark size=2, mark options={solid}, only marks, forget plot]
table {%
33 153
37 156
38 182
39 161
40 141
41 123
42 93
43 79
44 61
45 50
48 30
62 200
63 200
64 200
65 200
66 200
67 200
};
\end{axis}

\end{tikzpicture}
  \vspace{-1.5em}
  \caption{Krylov solver iterations: Comparison of different variants for $N=320$.}
  \label{fig:ksp_iters_variants_320}
\end{figure}

\begin{figure}[tbp]
  \centering
\begin{tikzpicture}

\definecolor{crimson2143940}{RGB}{214,39,40}
\definecolor{darkgray176}{RGB}{176,176,176}
\definecolor{darkorange25512714}{RGB}{255,127,14}
\definecolor{forestgreen4416044}{RGB}{44,160,44}
\definecolor{lightgray204}{RGB}{204,204,204}
\definecolor{mediumpurple148103189}{RGB}{148,103,189}
\definecolor{steelblue31119180}{RGB}{31,119,180}

\begin{axis}[
height=0.3\textheight,
legend cell align={left},
legend style={fill opacity=0.8, draw opacity=1, text opacity=1, at={(0.01, 0.97)}, anchor=north west, draw=lightgray204},
tick align=outside,
tick pos=left,
width=\textwidth,
x grid style={darkgray176},
xlabel={Outer nonlinear iteration \(\displaystyle k\)},
xmin=-3.45, xmax=72.45,
xtick style={color=black},
y grid style={darkgray176},
ylabel={Inner Krylov method iterations},
ymin=-8.95, ymax=209.95,
ytick style={color=black}
]
\addplot [semithick, steelblue31119180, mark=*, mark size=2, mark options={solid}]
table {%
0 1
1 1
2 1
3 1
4 1
5 1
6 1
7 1
8 1
9 1
10 1
11 1
12 1
13 1
14 1
15 1
16 1
17 1
18 1
19 1
20 1
21 1
22 1
23 1
24 1
25 1
26 1
27 1
28 1
29 1
30 1
31 1
32 1
33 1
34 1
35 1
36 1
37 1
38 1
39 1
40 1
41 1
42 1
43 1
44 1
45 1
46 1
47 1
48 1
49 1
50 1
51 1
52 1
53 1
54 1
55 1
56 1
57 1
};
\addlegendentry{Direct}
\addplot [semithick, black, mark=x, mark size=2, mark options={solid}, only marks, forget plot]
table {%
33 1
};
\addplot [semithick, darkorange25512714, mark=*, mark size=2, mark options={solid}]
table {%
0 3
1 9
2 8
3 8
4 8
5 10
6 12
7 13
8 15
9 17
10 17
11 18
12 19
13 18
14 18
15 18
16 19
17 21
18 21
19 23
20 26
21 27
22 73
23 75
24 76
25 76
26 78
27 78
28 28
29 27
30 28
31 30
32 32
33 46
34 63
35 48
36 43
37 37
38 35
39 33
40 32
41 28
42 32
43 32
44 32
45 33
46 33
47 33
48 29
49 40
50 38
51 40
52 42
53 42
54 42
55 41
56 38
57 41
58 43
59 53
};
\addlegendentry{Matching}
\addplot [semithick, black, mark=x, mark size=2, mark options={solid}, only marks, forget plot]
table {%
33 46
34 63
};
\addplot [semithick, forestgreen4416044, mark=*, mark size=2, mark options={solid}]
table {%
0 3
1 9
2 8
3 8
4 8
5 10
6 12
7 13
8 15
9 17
10 18
11 18
12 18
13 18
14 18
15 18
16 19
17 21
18 21
19 23
20 26
21 27
22 73
23 75
24 76
25 74
26 76
27 78
28 28
29 27
30 28
31 30
32 32
33 48
34 64
35 51
36 43
37 38
38 35
39 33
40 32
41 32
42 32
43 30
44 30
45 33
46 34
47 29
48 35
49 35
50 35
51 37
52 44
53 32
54 39
55 41
56 36
57 35
58 43
59 43
60 50
61 33
62 45
};
\addlegendentry{AMG[$S_1$] matching}
\addplot [semithick, black, mark=x, mark size=2, mark options={solid}, only marks, forget plot]
table {%
33 48
34 64
57 35
};
\addplot [semithick, crimson2143940, mark=*, mark size=2, mark options={solid}]
table {%
0 3
1 9
2 8
3 8
4 8
5 10
6 12
7 13
8 15
9 17
10 18
11 18
12 19
13 18
14 18
15 18
16 19
17 21
18 21
19 23
20 26
21 27
22 74
23 76
24 76
25 76
26 78
27 78
28 28
29 27
30 28
31 30
32 33
33 56
34 77
35 64
36 43
37 37
38 37
39 35
40 32
41 32
42 32
43 32
44 27
45 34
46 33
47 35
48 35
49 37
50 32
51 39
52 42
53 34
54 28
55 21
56 32
57 39
58 39
59 41
60 75
61 31
62 31
63 37
64 41
65 56
66 103
67 97
68 95
69 94
};
\addlegendentry{Decomposition-free}
\addplot [semithick, black, mark=x, mark size=2, mark options={solid}, only marks, forget plot]
table {%
33 56
34 77
55 21
56 32
57 39
62 31
63 37
};
\addplot [semithick, mediumpurple148103189, mark=*, mark size=2, mark options={solid}]
table {%
0 3
1 5
2 5
3 5
4 7
5 8
6 9
7 11
8 12
9 15
10 17
11 20
12 23
13 24
14 26
15 30
16 32
17 35
18 38
19 42
20 44
21 46
22 48
23 49
24 50
25 49
26 50
27 50
28 53
29 54
30 56
31 59
32 62
33 178
34 165
35 142
36 68
37 200
38 159
39 70
40 61
41 60
42 61
43 61
44 63
45 64
46 64
47 63
48 63
49 60
50 61
51 61
52 60
53 60
54 60
55 57
56 54
57 51
58 49
59 48
};
\addlegendentry{Block-triangular}
\addplot [semithick, black, mark=x, mark size=2, mark options={solid}, only marks, forget plot]
table {%
33 178
37 200
};
\end{axis}

\end{tikzpicture}
  \vspace{-1.5em}
  \caption{Krylov solver iterations: Comparison of different variants for $N=640$.}
  \label{fig:ksp_iters_variants_640}
\end{figure}

\begin{figure}[tbp]
  \centering
\begin{tikzpicture}

\definecolor{crimson2143940}{RGB}{214,39,40}
\definecolor{darkgray176}{RGB}{176,176,176}
\definecolor{darkorange25512714}{RGB}{255,127,14}
\definecolor{forestgreen4416044}{RGB}{44,160,44}
\definecolor{lightgray204}{RGB}{204,204,204}
\definecolor{mediumpurple148103189}{RGB}{148,103,189}
\definecolor{steelblue31119180}{RGB}{31,119,180}

\begin{axis}[
height=0.25\textheight,
legend cell align={left},
legend style={
  fill opacity=0.8,
  draw opacity=1,
  text opacity=1,
  at={(0.01,0.97)},
  anchor=north west,
  draw=lightgray204
},
tick align=outside,
tick pos=left,
width=\textwidth,
x grid style={darkgray176},
xlabel={Iteration \(\displaystyle k\)},
xmin=-3.45, xmax=72.45,
xtick style={color=black},
y grid style={darkgray176},
ylabel={Wall clock time [s]},
ymin=-641.6569734, ymax=13786.1275274,
ytick style={color=black}
]
\addplot [semithick, steelblue31119180, mark=*, mark size=2, mark options={solid}]
table {%
0 40.270953
1 81.51909
2 122.859301
3 164.090464
4 205.350042
5 246.634044
6 287.979921
7 329.304329
8 370.581275
9 411.924065
10 453.340611
11 494.837003
12 536.155657
13 577.838082
14 619.358307
15 661.129187
16 703.01088
17 744.963641
18 786.041323
19 827.072924
20 867.949678
21 908.760644
22 949.699285
23 990.47041
24 1031.378512
25 1072.133915
26 1113.089073
27 1153.956349
28 1194.803994
29 1235.542839
30 1276.546741
31 1317.526695
32 1358.379115
33 1399.428663
34 1436.674711
35 1477.588457
36 1518.478309
37 1559.536979
38 1600.420476
39 1641.248804
40 1682.112317
41 1722.878676
42 1763.93574
43 1804.701986
44 1845.531831
45 1886.411862
46 1927.226215
47 1967.911277
48 2008.820537
49 2049.593864
50 2090.187018
51 2130.644349
52 2171.321464
53 2211.602355
54 2251.826247
55 2291.891453
56 2332.039796
57 2354.041123
};
\addlegendentry{Direct}
\addplot [semithick, darkorange25512714, mark=*, mark size=2, mark options={solid}]
table {%
0 48.949913
1 130.317042
2 205.468443
3 281.036921
4 356.78999
5 441.753753
6 538.610402
7 640.887174
8 753.259141
9 876.197614
10 999.3724
11 1127.428749
12 1261.123136
13 1389.404485
14 1517.896738
15 1646.604029
16 1778.404548
17 1923.027574
18 2065.144277
19 2220.87275
20 2391.471107
21 2563.770561
22 2972.883245
23 3402.424912
24 3844.971582
25 4304.827626
26 4776.821141
27 5249.526161
28 5446.017729
29 5634.059799
30 5832.813151
31 6041.837934
32 6259.914121
33 6561.446648
34 6953.686778
35 7262.370735
36 7548.709012
37 7797.60877
38 8035.84966
39 8265.089967
40 8482.5452
41 8675.665671
42 8899.607262
43 9122.008312
44 9344.890689
45 9572.627319
46 9803.091679
47 10032.409227
48 10237.410852
49 10501.233358
50 10757.541901
51 11027.470129
52 11308.827012
53 11587.331914
54 11872.008747
55 12148.237531
56 12399.344579
57 12673.43885
58 12955.22955
59 13130.319141
};
\addlegendentry{Matching}
\addplot [semithick, forestgreen4416044, mark=*, mark size=2, mark options={solid}]
table {%
0 39.34689
1 104.734311
2 165.522392
3 225.984502
4 286.471839
5 355.008236
6 432.541293
7 513.725535
8 603.604478
9 701.771162
10 801.312513
11 901.557653
12 1001.585171
13 1101.966301
14 1201.918962
15 1301.508731
16 1404.882374
17 1516.532199
18 1628.078637
19 1747.589954
20 1878.598251
21 2014.04801
22 2332.434387
23 2653.40213
24 2979.229649
25 3291.261486
26 3617.795518
27 3966.153036
28 4115.968894
29 4261.747357
30 4411.641397
31 4568.063482
32 4735.142028
33 4973.51947
34 5276.773376
35 5526.662158
36 5745.161258
37 5941.627906
38 6122.098469
39 6290.882546
40 6457.051783
41 6625.430254
42 6794.081785
43 6957.82603
44 7116.287919
45 7292.700358
46 7472.567809
47 7632.004651
48 7811.559067
49 7997.925264
50 8181.682833
51 8375.799406
52 8596.921369
53 8769.048978
54 8972.182434
55 9185.146171
56 9373.314488
57 9558.683108
58 9775.676287
59 9996.463999
60 10247.930515
61 10419.862211
62 10538.402684
};
\addlegendentry{AMG[$S_1$] matching}
\addplot [semithick, crimson2143940, mark=*, mark size=2, mark options={solid}]
table {%
0 14.151413
1 33.282963
2 51.727509
3 70.146672
4 88.507484
5 108.347216
6 129.774191
7 151.706542
8 175.172323
9 200.066873
10 225.905072
11 251.591341
12 278.094544
13 303.872636
14 329.564728
15 355.546086
16 382.08377
17 410.122674
18 438.052865
19 467.785563
20 499.578602
21 532.095957
22 599.378659
23 666.786686
24 734.35761
25 798.271747
26 863.491807
27 928.430476
28 959.574259
29 989.764149
30 1020.988448
31 1053.609946
32 1088.153807
33 1138.248691
34 1198.957808
35 1250.900444
36 1292.089462
37 1329.099673
38 1366.410183
39 1402.221858
40 1436.005284
41 1469.995536
42 1504.016559
43 1537.472097
44 1567.9743
45 1603.021959
46 1637.582694
47 1673.574956
48 1709.143417
49 1746.190352
50 1780.230269
51 1818.65415
52 1859.252445
53 1894.610511
54 1925.364767
55 1951.618552
56 1981.698354
57 2016.669349
58 2051.896022
59 2091.701895
60 2154.493378
61 2187.444474
62 2220.58027
63 2254.05791
64 2290.192223
65 2339.959401
66 2420.633593
67 2498.116342
68 2573.61077
69 2613.347499
};
\addlegendentry{Decomposition-free}
\addplot [semithick, mediumpurple148103189, mark=*, mark size=2, mark options={solid}]
table {%
0 15.321007
1 32.364273
2 49.435657
3 66.530275
4 85.08605
5 104.348612
6 124.55976
7 146.233967
8 168.961536
9 194.046461
10 220.626876
11 249.72776
12 281.480861
13 314.058193
14 348.205633
15 385.788545
16 425.233509
17 467.422952
18 512.04271
19 560.302124
20 610.090744
21 662.566946
22 716.168754
23 769.673659
24 823.576061
25 874.65189
26 927.300639
27 979.823683
28 1034.692337
29 1090.510608
30 1148.345797
31 1208.513277
32 1271.842285
33 1436.730691
34 1585.625585
35 1718.798887
36 1787.140029
37 1882.728469
38 2027.168992
39 2100.942368
40 2164.007995
41 2225.970803
42 2289.325624
43 2351.448791
44 2414.481375
45 2478.860265
46 2542.76911
47 2606.449272
48 2669.932152
49 2730.894385
50 2792.431688
51 2854.103428
52 2914.571865
53 2974.939423
54 3034.951706
55 3092.683925
56 3147.635364
57 3199.779401
58 3250.311589
59 3277.286796
};
\addlegendentry{Block-triangular}
\end{axis}

\end{tikzpicture}
  \vspace{-1.5em}
  \caption{Comparison of wall clock time vs.~iterations of different variants for $N=640$.}
  \label{fig:wall_time_comparison_640}
\end{figure}
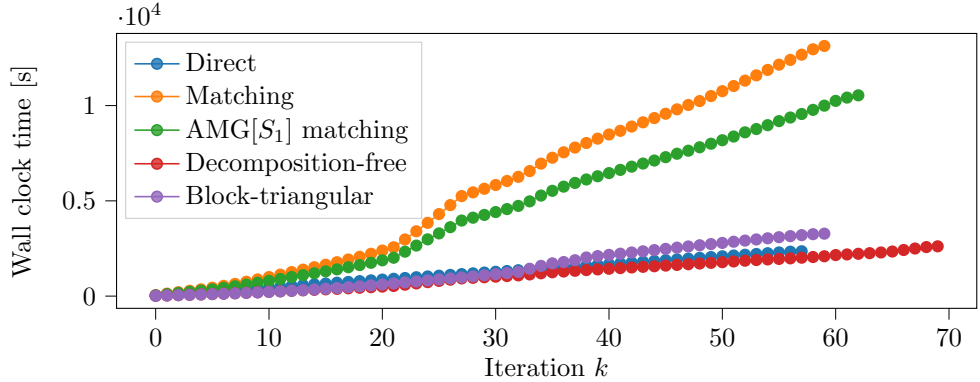

\subsection{Preconditioner comparison for the 2D case}

We start with the 2D instances. In Fig.~\ref{fig:ksp_iters_minres_exact_S1_S2} we see that for the basic Schur complement preconditioner the number of {\scshape minres} iterations per iteration of the sequential homotopy method stays moderately low (10--40 iterations) most of the time. Between iterations 20 and 30 as well as close to the solution, the number of iterations rises to about 60--80. This shows that the method saves numerical effort in the linear algebra part when being far away from the solution. The number of nonlinear iterations increases considerably when going from $N=160$ to $N=320$, which we attribute to the steep boundary layer of the optimal state, which is not faithfully resolved on meshes smaller than $N \le 160$.

In Fig.~\ref{fig:ksp_iters_minres_exact_S1_S2}--\ref{fig:ksp_iters_variants_640}, discarded iterations (due to $\theta > \theta_\mathrm{max}$ or Krylov-subspace methods failing to converge for a given linear system) are marked with an additional black $\times$.

Compared to Fig.~\ref{fig:ksp_iters_minres_exact_S1_S2}, we see in Fig.~\ref{fig:ksp_iters_minres_lu_S1_match_S2} that the matching approach slightly increases the required number of {\scshape minres} iterations, while the number of nonlinear iterations stays roughly the same. In contrast to the basic Schur complement preconditioner (which takes more than 41 hours for $N=320$), also the case $N=640$ can be solved in under four hours now (cf. Fig.~\ref{fig:wall_time_comparison_640}).

Using the AMG[$\hat{S}_1$] and decomposition-free Schur complement preconditioners result in qualitatively very similar behavior, so we just display the latter in Fig.~\ref{fig:ksp_iters_minres_lu_free}. This exhibits the efficiency and robustness of the solver for a range of problem sizes.

Using the block-triangular version of the decomposition-free Schur complement preconditioner seems to deliver a more robust preconditioner with fewer fluctuations in the number of iterations
(see Fig.~\ref{fig:ksp_iters_gmres_block_triangular}).

We compare the number of required inner and outer iterations resulting from the different preconditioner variants on grids with fixed size in Fig.~\ref{fig:ksp_iters_variants_320} ($N=320$) and Fig.~\ref{fig:ksp_iters_variants_640} ($N=640$). We observe that the expected increase in the number of outer iterations for solving the linear subproblems inexactly with Krylov-subspace methods compared to direct decomposition approaches is moderate for $N=640$ (70 or better vs.~57). For $N=320$, some preconditioner choices perform better for the numerical challenges that start in outer iteration $k=33$, leading to fewer overall outer iterations. The differences are mostly due to the number of failed iterations (marked with $\times$), whenever the sequential homotopy method is forced to increase $\lambda$ to keep inside the region of local convergence for the nonlinear subproblems.

When it comes to runtime between all preconditioners, there is a clear benefit of using the decomposition-free Schur complement preconditioners,
which are competitive with
the use of direct linear algebra even for the 2D problem on a reasonably fine grid 
(see Fig.~\ref{fig:wall_time_comparison_640}). 

\subsection{Parallelization for the 2D case}

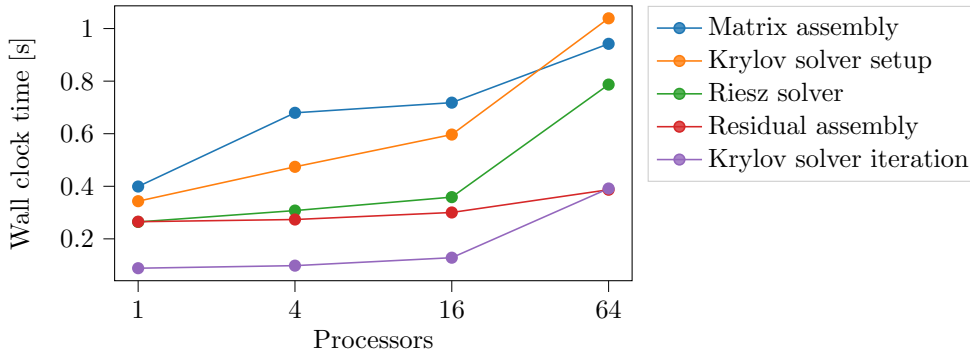
\begin{figure}[tbp]
  \centering
\begin{tikzpicture}

\definecolor{crimson2143940}{RGB}{214,39,40}
\definecolor{darkgray176}{RGB}{176,176,176}
\definecolor{darkorange25512714}{RGB}{255,127,14}
\definecolor{forestgreen4416044}{RGB}{44,160,44}
\definecolor{lightgray204}{RGB}{204,204,204}
\definecolor{mediumpurple148103189}{RGB}{148,103,189}
\definecolor{steelblue31119180}{RGB}{31,119,180}

\begin{axis}[
height=0.25\textheight,
legend cell align={left},
legend style={fill opacity=0.8, draw opacity=1, text opacity=1, draw=white!80!black},
legend pos=outer north east,
log basis x={10},
tick align=outside,
tick pos=left,
width=0.65\textwidth,
x grid style={darkgray176},
xlabel={Processors},
xmin=0.812252396356235, xmax=78.7932424540746,
xmode=log,
xtick=data,
xticklabels={$1$, $4$, $16$, $64$},
xtick style={color=black},
y grid style={darkgray176},
ylabel={Wall clock time [s]},
ymin=0.0408156, ymax=1.0864844,
ytick style={color=black}
]
\addplot [semithick, steelblue31119180, mark=*, mark size=2, mark options={solid}]
table {%
1 0.399111
4 0.679511
16 0.718144
64 0.941928
};
\addlegendentry{Matrix assembly}
\addplot [semithick, darkorange25512714, mark=*, mark size=2, mark options={solid}]
table {%
1 0.343295
4 0.474039
16 0.596765
64 1.038954
};
\addlegendentry{Krylov solver setup}
\addplot [semithick, forestgreen4416044, mark=*, mark size=2, mark options={solid}]
table {%
1 0.264375
4 0.307594
16 0.358727
64 0.786896
};
\addlegendentry{Riesz solver}
\addplot [semithick, crimson2143940, mark=*, mark size=2, mark options={solid}]
table {%
1 0.265266
4 0.273581
16 0.300339
64 0.386916
};
\addlegendentry{Residual assembly}
\addplot [semithick, mediumpurple148103189, mark=*, mark size=2, mark options={solid}]
table {%
1 0.088346
4 0.098106
16 0.128332
64 0.391471
};
\addlegendentry{Krylov solver iteration}
\end{axis}

\end{tikzpicture}
  \vspace{-1.5em}
  \caption{Weak scaling results for 2D case using the block-diagonal decomposition free preconditioner and {\scshape minres}. We show the average wall clock times over the last 10 iterations on the finest grid level.}
  \label{fig:weak_scaling}
\end{figure}

\begin{table}[tbp]
  \centering
  \begin{tabular}{ccccc}
    \hline
    Number of processors & 1 & 4 & 16 & 64 \\
    $N$ & 320 & 640 & 1,280 & 2,560 \\
    Degrees of freedom & 309,123 & 1,232,643 & 4,922,883 & 19,676,163 \\
    \hline
  \end{tabular}
  \caption{Correspondence of number of processors and discretization for the weak scaling experiment.}
  \label{tab:weak_scaling}
\end{table}

We report weak scaling results of the decomposition-free Schur complement preconditioner in Fig.~\ref{fig:weak_scaling} for up to 64 processors. The number of degrees of freedom for each run is provided in Tab.~\ref{tab:weak_scaling}.
In a case of perfect parallel scaling, all lines in Fig.~\ref{fig:weak_scaling} would be horizontal.
There is a noticeable drop in parallel efficiency when the number of processors and degrees of freedom increase. However, this drop goes in conjunction with a drop in parallel efficiency also for the Riesz solver, which is simply AMG-preconditioned {\scshape cg} for the Poisson equation, whose parallel scaling can be considered state-of-the-art.

\subsection{3D case}

\begin{figure}[tbp]
  \centering
  \includegraphics[width=0.8\textwidth]{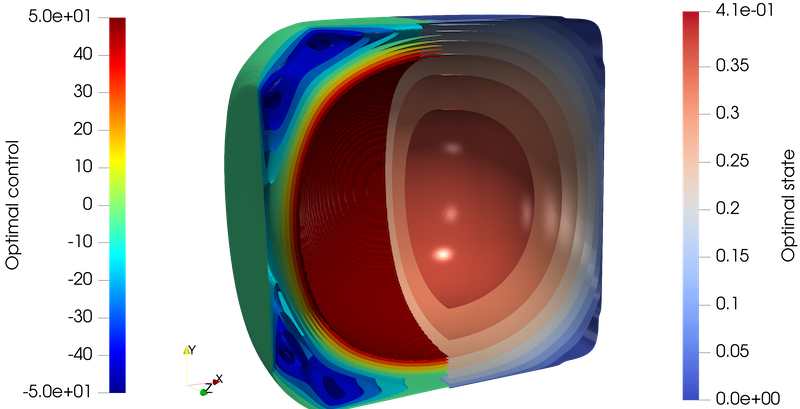}
  \caption{Result for the 3D test case for $N=160$. The unit cube is cut in half in the $X/Y$ plane. On the left we see isosurfaces of the optimal control and on the right isosurfaces of the optimal state.}
  \label{fig:result_3d}
\end{figure}

\begin{table}[tbp]
  \centering
  \begin{tabular}{ccccc}
    $N$ & Degrees of freedom & Outer iter. & {\scshape minres} iter. & Wall clock time [min] \\ \hline
    20  & 27,783     & 52  & 2693 & 1.67   \\
    40  & 206,763    & 39  & 1730 & 2.74   \\
    80  & 1,594,323  & 34  & 1186 & 12.06  \\
    160 & 12,519,843 & 35  & 760  & 63.20  \\
    \hline
  \end{tabular}
  \caption{Statistics for the 3D test case on each discretization level. The computations were performed on 32 processors in parallel.}
  \label{tab:3d}
\end{table}

\begin{figure}[tbp]
  \centering
  \input{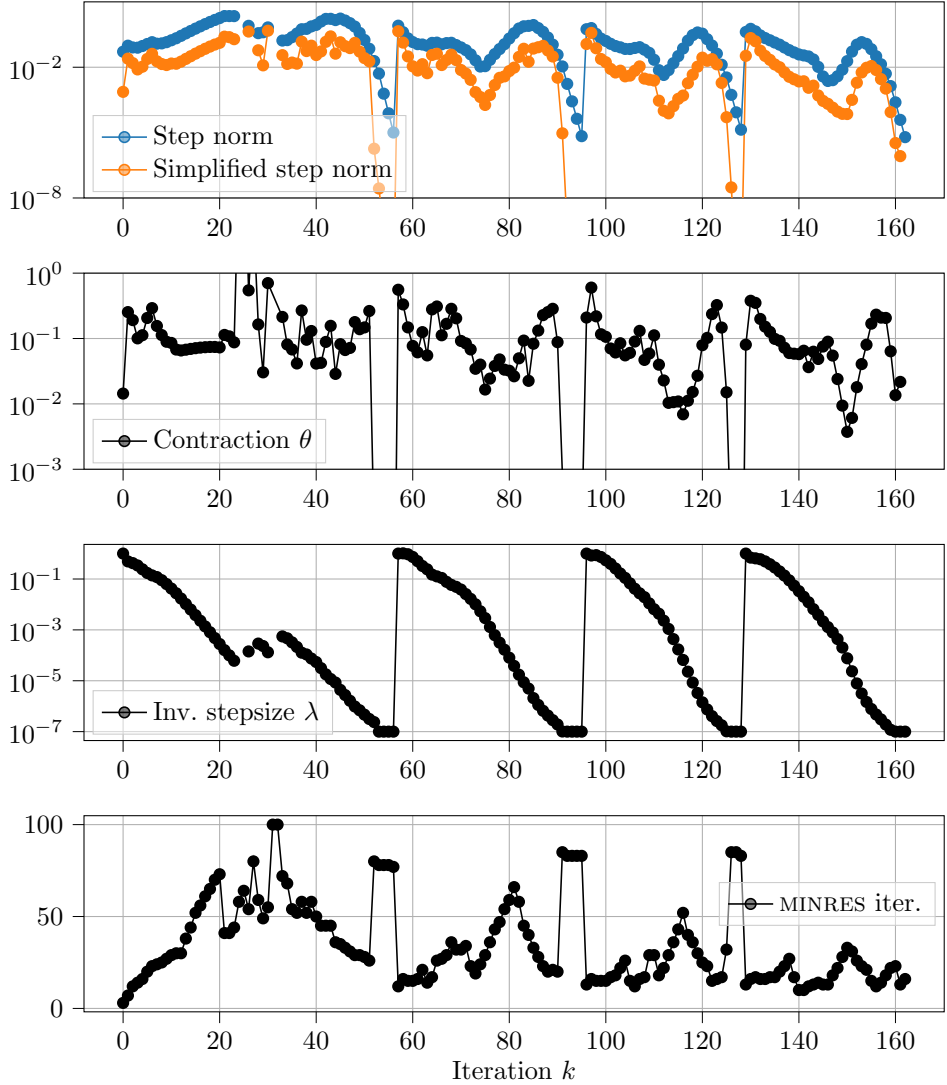}
  \caption{Statistics for the iterations of the 3D test case on refined meshes. Failed iterations are indicated as missing dots in the first and third plot. The contraction $\theta$ is the quotient of the simplified step norm over the step norm.}
  \label{fig:iters_3d}
\end{figure}

\begin{figure}[tbp]
  \centering
  \includegraphics[width=\textwidth]{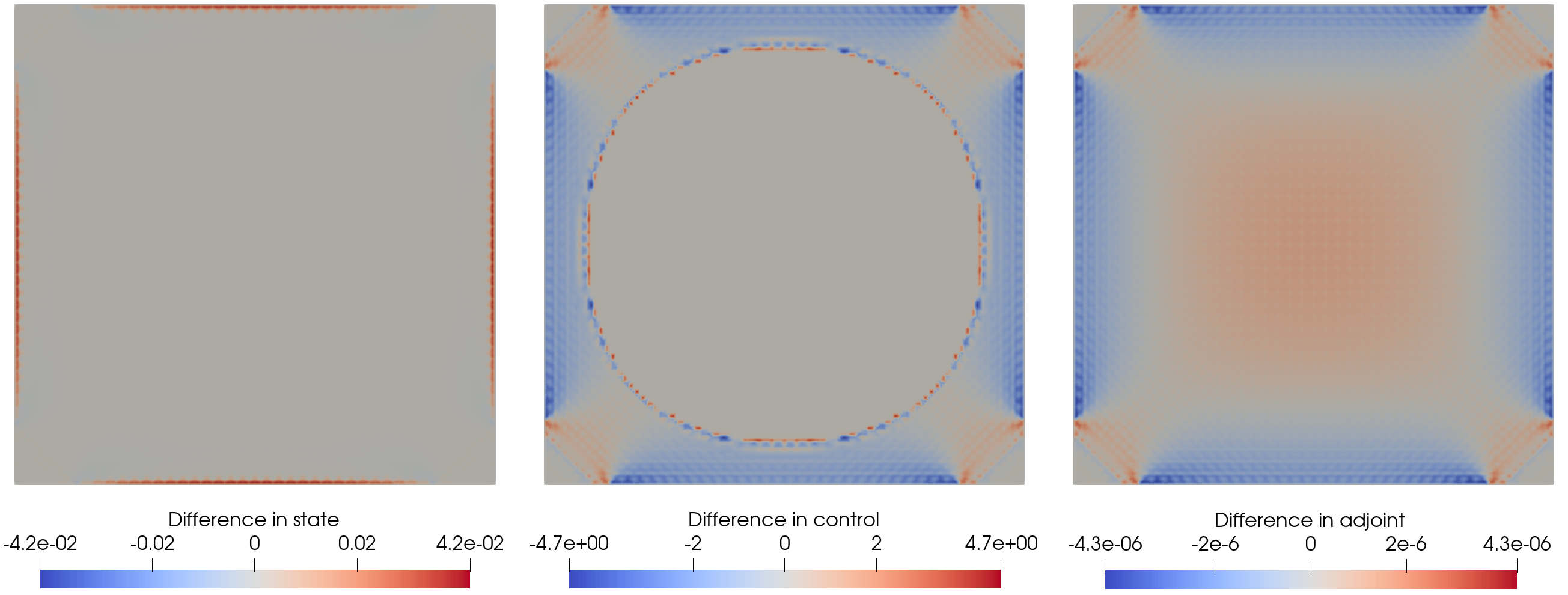}
  \caption{Slice through the X/Y plane of the absolute difference of the optimal state, control, and adjoint approximations on the two finest 3D meshes ($N=160$ minus $N=80$).
  The difference for the control is rather large close to the boundary of the active set (central gray region). The different magnitudes of the differences are partly due to the different numerical ranges (not shown), which are approximately $[0, 0.41]$ (state), $[-50, 50]$ (control), and $[-5 \cdot 10^{-5}, 5 \cdot 10^{-4}]$ (adjoint).}
  \label{fig:diff_refine}
\end{figure}

In Fig.~\ref{fig:result_3d} we show the optimal solution of the model problem~\eqref{eqn:model_problem} on the unit cube on a regular tetrahedral grid with $N = 160$ element edges per edge of the unit cube and control bounds of $\pm 50$. 
Similar to the optimal solutions in 2D, the optimal controls and optimal states exhibit very steep slopes.
The computations are carried out on a sequence of uniformly refined equidistant meshes for $N = 20, 40, 80, 160$. Starting with an initial guess of zero, we employ Alg.~\ref{alg:iseqhom} with the block-diagonal decomposition-free preconditioner and the same tolerance of $\mathrm{tol} = 10^{-5}$ as in the 2D case on each level, moving from the coarsest level to the finest while using the solution of the previous level as the initial guess on the next (finer) mesh. We have slightly reduced the augmentation parameter for the 3D test case from $\rho=10^{-1}$ to $\rho = 10^{-3}$ to avoid occasionally failing iterations on the finer levels. In Tab.~\ref{tab:3d}, we see that most CPU time is spent on the finest level and that comparably few {\scshape minres} iterations are required there. We depict further information about the iterations on the different levels in Fig.~\ref{fig:iters_3d}. From the contraction rates $\theta$ of the monotonicity test, we see that for the initial guesses on each level a rather large $\lambda$ is required to stay in the region of local convergence for the simplified semismooth Newton method, while later, fast local convergence with $\lambda=\lambda_\mathrm{min}$ can be achieved. Rather large increases in the step norms after refinement of the grid can be observed. We attribute these to the spatial resolution of the solution's local features, such as steep slopes at the domain boundary and the boundary of the active set of the solution (cf. Fig.~\ref{fig:result_3d} and Fig.~\ref{fig:diff_refine}), which are very sensitive to the chosen finite element mesh.
Altogether the proposed inexact sequential homotopy method with the suggested preconditioners can reliably and efficiently solve large-scale, highly nonlinear, badly conditioned problems.

\section{Conclusion}

We have extended a sequential homotopy method to allow the use of inexact solvers for the linearized subsystems. We provided analysis for symmetric positive definite, block-diagonal Schur complement preconditioners for double saddle-point systems, with a view to applying these to a prominent class of nonlinear PDE-constrained optimization problems. For a challenging model problem, we provided approximations of the Schur complements for the efficient and parallel application of approximated double saddle-point preconditioners. The implicit regularization feature of the sequential homotopy method was beneficial for the solution of the linearized subproblems with preconditioned {\scshape minres} and {\scshape gmres}. We provided numerical results for a hierarchy of Schur complement approximations, which shed light on the consequences of each approximation step on the way to the eventual fast, effective, and parallelizable, decomposition-free preconditioner.
We provided a weak scaling analysis for the 2D case and efficiently solved a large 3D problem instance with 12 million degrees of freedom.

\section*{Acknowledgements} JWP gratefully acknowledges support from the Engineering and Physical Sciences Research Council (UK) grant EP/S027785/1, and a Fellowship of The Alan Turing Institute.
We thank the unknown referees for their constructive suggestions, which helped to improve an earlier version of this article.

\bibliographystyle{plain}
\bibliography{refs_R1}

\end{document}